\documentclass[smallextended,referee,envcountsect,]{svjour3}
\smartqed
\usepackage{graphicx}
\journalname{JOTA}
\usepackage{amsmath, amssymb}

\newcommand{\g}{\gamma}

\newcommand{\di}[1]{\,\mathrm{d}#1}

\newtheorem{thm}{Theorem}
\makeatletter

\usepackage[utf8]{inputenc}

\renewcommand\subsection{\@startsection{subsection}{2}{\z@}%
  {-3.25ex\@plus -1ex \@minus -.2ex}%
  {1.5ex \@plus .2ex}%
  {\normalfont\bfseries}}
\makeatother

\usepackage{hyperref}
\makeatletter
\renewcommand\subsection{\@startsection{subsection}{2}{\z@}%
  {-3.25ex\@plus -1ex \@minus -.2ex}%
  {1.5ex \@plus .2ex}%
  {\normalfont\bfseries}}
\makeatother

\makeatletter
\def\makeheadbox{\relax}
\makeatother

\begin{document}

\title{An optimal control problem for Stokes--Cahn--Hilliard--Oono equations with regular potential}
\titlerunning{An optimal control problem for phase field modeling}
\authorrunning{A. Kundu}

\author{Arghya Kundu}

\institute{Arghya Kundu \at
             Indian Institute of Technology Kharagpur \\
             Kharagpur, India\\
              arghyakundu5@gmail.com
}

 \date{}

 \maketitle

\begin{abstract}
This article discusses an optimal control problem for a  phase field model of two immiscible incompressible  fluid flow, incorporating surface tension effects. The optimal control problem is defined with a $L^2$--cost functional and subject to the constraints governed by a system of coupled Stokes--Cahn--Hilliard--Oono equations. In this model, fluids are separated by a dynamic diffuse interface of finite width. We investigate the optimality condition of a given control. Initially, we establish the existence of an optimal solution for the coupled optimal control problem. Subsequently, we derive the optimality condition with respect to the corresponding adjoint system.
\end{abstract}
\keywords{Cahn--Hilliard--Oono equations \and Stokes equations \and Phase--field model \and Optimal control problem \and First--order optimality condition}
\subclass{ 49Q22 \and 49J20 \and 49K20 \and 35K35 \and 35Q35 
}


\section{Introduction}
This article focuses on examining an optimal control problem associated to a phase field method for describing the movement of two incompressible immiscible fluids within a smooth bounded domain $\Omega \subset \mathbb{R}^n\,(n=2,3) $. The proposed optimal control problem: find $\theta \in \mathcal{U}_{ad}$ such that 
 \begin{align}
      J(\theta)=\frac{1}{2}\int_{(0,T)\times  \Omega}  |v-v_d|^2\, \di(x,t)+ \frac{1}{2}\int_{(0,T)\times  \Omega} |u-u_d|^2 \, \di(x,t)\nonumber\\+ \frac{\beta}{2}\int_{(0,T)\times  \Omega}|\theta|^2 \, \di(x,t) \label{1.7}
 \end{align}
    reaches its infimum with respect to $\theta$. Subject to the control constraints
    \begin{equation*}
        \theta \in \mathcal{U}_{ad} := \{ \theta \in L^2 (0,T; \mathcal{H}): \theta_{min} \leq \theta \leq \theta_{max}\,\text{a.e. in }(0,T) \times \Omega\}
    \end{equation*}
    and the state equations
    \begin{subequations}
        \begin{align}
        \partial_t v -\mu\Delta v + \nabla p&= - \lambda u\nabla w+ \theta && \textnormal {in}\quad (0,T)\times \Omega,\label{1.3a}\\
\nabla \cdot v&=0 && \textnormal {in}\quad (0,T)\times \Omega,\label{1.3b}\\
v&=0 && \textnormal {on}\quad (0,T)\times \partial \Omega,\label{1.3c}\\
 v (0,x)&=v_0 (x)&& \text{in}\quad \Omega,\label{1.3d}\\
\partial_t u  +  v \cdot \nabla u+\alpha u& =  \Delta w&& \text{in}\quad (0,T)\times \Omega, \label{1.3e}\\
w &=-\Delta u+f(u) && \text{in}\quad (0,T)\times \Omega,\label{1.3f}\\
\nabla u \cdot \vec{n}&=0 && \text{on} \quad (0,T)\times \partial\Omega,\label{1.3g}\\
\nabla w \cdot \vec{n}&=0 && \text{on}\quad (0,T)\times \partial\Omega,\label{1.3h}\\
u(0,x)&=u_0(x)&& \text{in}\quad \Omega,\label{1.3i}
    \end{align}
    \end{subequations}
where $v_d$, $u_d$ refers to the desired states of $v$ and $u$, respectively; $\alpha >0$, $\beta > 0$ are constants. We define the state equations \eqref{1.3a} -- \eqref{1.3i} by $\mathcal{(P)}$ and the entire control problem \eqref{1.7} -- \eqref{1.3i} by $\mathcal{(OP)}$.

For $\alpha =0$, the system \eqref{1.3a} -- \eqref{1.3i} describes the system of Stokes--Cahn--Hilliard equations. Here, $\mu$ is viscosity,  $ v$ is  unknown  Eulerian velocity and $\lambda$ is interfacial width parameter. 
The order parameter $u$ represents concentration or the difference in volume fraction. It is constrained to attain values -1 and 1 in regions occupied by pure fluids, while its values range between -1 and 1 within the diffuse interface with a width proportional to $\lambda$. The function $w$ represents the chemical potential associated with $( v,u)$. The term $f(u)$ is derived from the derivative of the homogeneous free energy functional $F$, which penalizes deviations from the constraint $|u|\leq 1$. Although there are few other choices of $F$ (logarithmic functional or non smooth functional, see  \cite{blowey1991cahn} \cite{copetti1992numerical}), in this study, $F$ is assumed to be a quadratic double well free energy functional.
\begin{equation*}
    F(u)=\frac{1}{4}(u^2-1)^2.
\end{equation*}
The nonlinear term $\lambda u \nabla w$ in \eqref{1.3a} models the surface tension effect in the incompressible Stokes equations \eqref{1.3a} -- \eqref{1.3b}. Equations \eqref{1.3e} -- \eqref{1.3f}  model the advective Cahn--Hilliard equation with advection effect $ v \cdot \nabla u$ in \eqref{1.3e}. 
\par Note that the advective term in \eqref{1.3e} is changed by using the incompressibility condition $\nabla \cdot  v= 0$, since
	$ \nabla \cdot [ v u] 
	= [\nabla \cdot v] u 
	+ v\cdot \nabla u 
	= v\cdot \nabla u .$ The surface tension term $-\lambda u\nabla w$ in \eqref{1.3a} can be replaced by $\lambda w\nabla u$ from the identity $\nabla  (\lambda u w)=\lambda u \nabla w+ \lambda w \nabla u$, where the additional gradient term is absorbed by into the pressure.
 For more on phase field modeling and their analysis we refer to \cite{anderson1998diffuse}, \cite{kim2012phase}, \cite{santra2020phase}, \cite{nurnberg2017numerical}, \cite{feng2006fully}, \cite{kim2012phase} and references therein. 

The Oono model, a variant of the classical Cahn--Hilliard equation, was proposed in \cite{oono1987computationally} (see Chapter 4 of \cite{villain2010phases}) to incorporate long range, nonlocal interactions and to improve computational efficiency in phase ordering simulations (though no numerical experiments using the Cahn–Hilliard–Oono model are presented here). Analysis of this system provides key insights into processes such as spinodal decomposition, pattern formation, and microstructure evolution in materials. A simple linear term, $\alpha u$ with $\alpha > 0$, is introduced in equation \eqref{1.3e} to represent these long range effects. The resulting Cahn–Hilliard–Oono system takes the form:
 \begin{subequations}
     \begin{align}
         \partial_t u + \alpha u&= \Delta w && \text{in}\,\,\, (0,T)\times  \Omega,\label{1.2a}\\
        w&= -\Delta u+ f(u) && \text{in}\,\,\, (0,T)\times  \Omega. \label{1.2b}
     \end{align}
 \end{subequations}
 One can observe that \eqref{1.2a} and \eqref{1.2b} are obtained by considering the free energy
 \begin{equation}
     E(u)=\frac{1}{2} |\nabla u|^2+F(u)+ \int_{\Omega} u(x')g(x',x)u(x) \ dx',
 \end{equation}
 where $| \cdot |$ denotes the Euclidean norm and  $g(x',x)$ describes the long ranged interactions. In particular, for the Oono model and in three dimensional cases
 \begin{equation}
     g(x',x)=\frac{ \alpha }{4\pi|x'-x|}, \quad \alpha > 0.
 \end{equation}
 It is worth noting that, the long range interactions exhibit a repulsive nature when $u(x)$ and $u(x')$ have opposite signs and hence encourage to form interfaces. Finally, the derivation of the Cahn--Hilliard--Oono equations is
 \begin{equation}
     \frac{\partial u}{\partial t} = \nabla^2_x\Big(\frac{\delta E(u)}{\delta u}\Big)= \nabla^2_x\Big(f(u)-\nabla^2u+\int_\Omega u(x')g(x',x)\,dx'\Big),  
 \end{equation}
 where $\frac{\delta}{\delta u}$ denotes the variational derivative. Note that $- \frac{1}{|x'-x|}$ is the Green function associated with the Laplacian operator. Now 
 \begin{align*}
     \nabla^2_x\Big(\int_\Omega u(x')g(x',x)\,dx'\Big)=\int_\Omega u(x') \nabla^2_x g(x',x)\, dx'=& -\alpha\int_\Omega u(x') \delta(x',x)\,dx'\\
     =&-\alpha u(x),
 \end{align*}
 which gives us the equations \eqref{1.2a} and \eqref{1.2b}. After adding $\alpha u$, system no longer  satisfies the conservation of mass and this makes the problem more interesting to derive the estimates of the order parameter and the time derivative of the order parameter. For more on system of Cahn--Hilliard--Oono model, we refer to \cite{villain2010phases}, \cite{miranville2011asymptotic}, \cite{miranville2016cahn}, \cite{he2022viscous} and references therein.

The optimal control of Cahn–Hilliard–Oono systems has attracted growing attention in recent years, particularly due to its applications in phase separation processes with relaxation effects. In \cite{colli2021well}, the authors have studied the distributed optimal control problem of a Cahn--Hilliard--Oono system admitting general potential that include both the case of a regular potential and the case of some singular potential. In \cite{gilardi2023well}, the authors studied an optimal control problem for a viscous Cahn–Hilliard–Oono system with dynamic boundary conditions. They derived first-order necessary optimality conditions using an adjoint system and addressed the existence of optimal controls within a standard distributed control framework. In \cite{zhang2024optimal}, the author investigated a distributed optimal control problem for the viscous Cahn–Hilliard–Oono system incorporating chemotaxis effects. The analysis included the derivation of optimality conditions and explored how chemotactic sensitivity influences the evolution and control of the phase field. Several works have addressed optimal control problems for phase-field models involving Cahn–Hilliard equations coupling with Navier–Stokes equations. In \cite{medjo2015optimal}, the authors investigate Pontryagin's maximum principle for a class of optimal control problems governed by a coupled Cahn–Hilliard–Navier–Stokes system in a two-dimensional bounded domain. In \cite{frigeri2016optimal},  authors study a distributive optimal control problem associated to a diffuse interface model for incompressible isothermal mixtures of two immiscible fluids coupling the Navier–Stokes system with a convective nonlocal Cahn–Hilliard equation in two dimensions of space. In \cite{hintermuller2017optimal}, the authors address the distributed optimal control of a time-discrete Cahn–Hilliard–Navier–Stokes system with variable densities. In \cite{biswas2020pontryagin}, the authors establish Pontryagin’s maximum principle and derive second-order optimality conditions for optimal control problems governed by two-dimensional nonlocal Cahn–Hilliard–Navier–Stokes equations. In \cite{zhao2023optimal}, the authors study an optimal distributed control problem for a two-dimensional Navier–Stokes–Cahn–Hilliard system incorporating chemotaxis and singular potential. For more on optimal control problems on coupled Cahn--Hilliard--Navier--Stokes equations, interested readers may look into \cite{biswas2020maximum}, \cite{tachim2021maximum}, \cite{dharmatti2021nonlocal}, \cite{hintermuller2024strong} and references therein. Several numerical studies on optimal control problem for this type of coupling equations have been developed, see \cite{hintermuller2018goal}, \cite{garcke2019optimal}, \cite{grassle2019simulation}. There are some contributions associated to optimal control problems for the Navier--Stokes or Cahn--Hilliard system in \cite{colli2015optimal}, \cite{colli2015deep}, \cite{colli2015boundary}, \cite{colli2017optimal}, \cite{fursikov2005optimal},  \cite{garcke2018optimal}, \cite{rocca2015optimal}, \cite{zhao2014optimal}, \cite{zhao2013optimal}. However, the optimal control problem of the Stokes--Cahn--Hilliard--Oono system with regular potential has never been explored in the literature.
\par The remainder of the paper is structured as follows. In Section 2, we present the well-posedness results for the state system $\mathcal{(P)}$
and derive the necessary estimates required for the analysis of the control problem. Section 3 contains the main results of the work, including the proof of existence of an optimal solution for the optimal control problem $\mathcal{(OP)}$, the derivation of the Fréchet differentiability of the control-to-state operator and the formulation of the first-order optimality conditions via the adjoint system. Additionally, the existence of solutions to both the linearized and adjoint systems is established in this section.

\section{Well-posedness of $\mathcal{(P)}$}

Let $r \in \mathbb{N}_0,\ 1\leq m < \infty$, the notations $L^m(\Omega)$ and $H^{r,m}(\Omega)$ conventionally denote the Lebesgue and Sobolev spaces with their respective norms $||\cdot||_{L^m}$ and $||\cdot||_{H^{r,m}}$. 
For a Banach space $V$ and its dual $V^*$, duality pairing is denoted by $ \langle \cdot \ 
,\  \cdot  \rangle_{V^* \times V}$. Symbols $\hookrightarrow$, $\hookrightarrow\hookrightarrow$ and $\underset{\hookrightarrow}{d}$ denote the continuous, compact and dense embeddings, respectively. In the convergence part, $\overset{w}{\rightharpoonup}$, $\overset{w^*}{\rightharpoonup}$ and $\rightarrow$ denote weak, weak$^*$ and strong convergences, respectively.

We define the function spaces:
\begin{align*}
\mathcal{V}& = \{\eta : \eta \in H^1_0 (\Omega)^n, \nabla \cdot \eta = 0 \},\quad \\
\mathcal{H}&= \overline{ \{u \in C^\infty_0(\Omega)^n: \nabla \cdot u=0\}}^{L^2(\Omega)^n}. 
\end{align*}
Let $A=-P\Delta$ be the Stokes operator and $P$ be the Leray projector such that $P : L^2(\Omega)^n \to \mathcal{H}$. Before study the weak formulation and the existence of weak solution parts, we consider the following assumptions:
\begin{enumerate}
    \item[(A1)]  $v_0$ and $ u_0\, \geq 0$ for all $x \in \Omega$.
    \item[(A2)] $ v_0 \in \mathcal{V}$ and $ u_0 \in  H^1(\Omega)$. 
    \item[(A3)] $\mathcal{U}$ is a nonempty bounded open subset of $L^2(0,T; \mathcal{H})$ containing $\mathcal{U}_{ad}$ and there exists $K >0$ such that 
    \begin{equation*}
        || \theta ||_{L^2(0,T; \mathcal{H})} \leq K, \,\,\text{for all } \theta \in \mathcal{U}.
    \end{equation*}
    \item[(A4)] $\beta > 0$, $v_d\in L^2(0,T; \mathcal{V})$, $u_d \in L^2(0,T; \mathcal{V})$, $\theta_{min}, \theta_{max} \in L^\infty((0,T) \times \Omega)$ such that $\theta_{min} \leq \theta_{max}$ for a.e. in $(0,T) \times \Omega$. 
\end{enumerate}

\subsection{Weak solution and basic properties}
\begin{definition}\label{defi 2.1}
    Let the assumptions (A1) -- (A4) hold true, then a triplet $( v, u,w)$ is called a weak solution of $\mathcal{(P)}$, if it satisfies $ v(0,x)=  v_0(x)$ and $u(0,x)=u_0(x)$ for all $x \in \Omega$ and 
 \begin{align}
      \int_S \langle\partial_t  v, \psi \rangle \, dt+  \mu \int_{(0,T)\times  \Omega} \nabla  v : \nabla \psi \, \di(x,t)=-\lambda \int_{(0,T)\times  \Omega} u \nabla w \cdot \psi\,\di(x,t)\nonumber\\
      +\int_{(0,T)\times  \Omega} \theta \cdot \psi\, \di(x,t), \label{2.8}\\
     \int_S \langle\partial_t u, \phi \rangle \, dt+ \int_{(0,T)\times  \Omega} \nabla w \cdot \nabla \phi\, \di(x,t) + \alpha \int_{(0,T)\times  \Omega} u \phi \,\di(x,t)\nonumber\\
     = \int_{(0,T)\times  \Omega}u  v \cdot \nabla \phi \,\di(x,t),\label{2.9}\\
     \int_{(0,T)\times  \Omega}w \varphi \,\di(x,t) =\int_{(0,T)\times  \Omega}\nabla u \cdot \nabla \varphi \, \di(x,t)+ \int_{(0,T)\times  \Omega} f(u)\varphi\, \di(x,t), \label{2.10}
 \end{align}
 for all $\psi \in L^2(0,T;\mathcal{V})$ and $\phi, \varphi \in L^2(0,T; H^1(\Omega))$.
\end{definition}

 \begin{thm}\label{Theorem 3.1}
     Let the assumptions (A1) -- (A4) hold true, then solution $(v, u,w)$ of $\mathcal{(P)}$  in the sense of Definition \ref{defi 2.1} satisfies the following a-priori estimate:
     \begin{align}
  ||u||_{L^\infty(0,T;L^4(\Omega))}+ || \partial_t u||_{L^2(0,T;H^1(\Omega)^
  *)}+ ||\nabla u||
  _{L^\infty(0,T; L^2(\Omega))}+||w||_{L^2(0,T;H^1(\Omega))}\nonumber\\
  +||\nabla w||_{L^2((0,T)\times  \Omega)}+
  ||u||_{L^2(0,T; H^3(\Omega))}+ ||v||_{L^2(0,T; L^2(\Omega)^n)}+||\nabla v||_{L^2((0,T)\times  \Omega)^{n\times n}}\nonumber\\
  + || \partial_t v||_{L^2(0,T;\mathcal{V}^*)} \leq C < \infty, \label{2.11}
     \end{align}
     where $C=C(||v_0||_{L^2(\Omega)^n},||u_0||_{H^1(\Omega)}, || \theta||_{ L^2(0,T; L^2(\Omega)^n)}, T)$ is a constant.
 
 \end{thm}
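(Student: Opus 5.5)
The plan is the standard energy estimate for the coupled system, with two extra pieces: control of the mass, which is no longer conserved because of the Oono term, and a lower bound for the quartic potential.

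\textbf{Energy identity.} Test \eqref{2.8} with $\psi=v$, \eqref{2.9} with $\phi=\lambda w$, and \eqref{2.10} with $\varphi=\lambda\partial_t u$ (formally; rigorously at the Galerkin level, then pass to the limit by weak lower semicontinuity). The coupling terms cancel:
\[
-\lambda\int_\Omega u\nabla w\cdot v \quad\text{and}\quad \lambda\int_\Omega u v\cdot\nabla w .
\]
Adding the three identities gives
\begin{align*}
\frac{d}{dt}\Big(\frac12\|v\|_{L^2}^2+\frac{\lambda}{2}\|\nabla u\|_{L^2}^2+\lambda\int_\Omega F(u)\,dx\Big)+\mu\|\nabla v\|_{L^2}^2+\lambda\|\nabla w\|_{L^2}^2\\
=(\theta,v)-\lambda\alpha\int_\Omega u w\,dx .
\end{align*}

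\textbf{The Oono term.} By \eqref{2.10},
\[
\int_\Omega u w\,dx=\|\nabla u\|_{L^2}^2+\int_\Omega f(u)u\,dx=\|\nabla u\|_{L^2}^2+\|u\|_{L^4}^4-\|u\|_{L^2}^2 .
\]
So $-\lambda\alpha\int_\Omega uw$ contributes a good sign apart from $\lambda\alpha\|u\|_{L^2}^2$. That term is bounded by $\frac{\lambda\alpha}{2}\|u\|_{L^4}^4+C|\Omega|$, or alternatively by $C\int_\Omega F(u)+C$, since $F(u)\ge \frac18u^4-C$.

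The forcing term is handled by $(\theta,v)\le \frac12\|\theta\|_{L^2}^2+\frac12\|v\|_{L^2}^2$.

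\textbf{Gronwall and first bounds.} Gronwall's inequality, together with $\theta\in L^2(0,T;\mathcal H)$ from (A3), $u_0\in H^1\subset L^4$ for $n\le3$, and (A2), yields bounds on:
\begin{itemize}
\item $v$ in $L^\infty(0,T;L^2)$;
\item $\nabla v$ in $L^2$;
\item $\nabla u$ in $L^\infty(0,T;L^2)$;
\item $u$ in $L^\infty(0,T;L^4)$, from $F(u)\ge\frac18 u^4-C$;
\item $\nabla w$ in $L^2((0,T)\times\Omega)$.
\end{itemize}

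\textbf{Full $H^1$ norm of $w$.} For the mean, take $\varphi=1$ in \eqref{2.10}: $\int_\Omega w=\int_\Omega f(u)$. This is bounded by $\|u\|_{L^3}^3+\|u\|_{L^1}$, which is uniformly bounded by the $L^\infty(H^1)$ bound on $u$ and Sobolev embedding. Poincaré--Wirtinger then gives $w\in L^2(0,T;H^1)$.

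\textbf{Time derivatives.} These follow by duality.
\begin{itemize}
\item From \eqref{2.9}, $\|\partial_t u\|_{H^{1*}}\le\|\nabla w\|+\alpha\|u\|+\|uv\|_{L^2}$, with $\|uv\|_{L^2}\le\|u\|_{L^4}\|v\|_{L^4}\le C\|\nabla v\|_{L^2}$, which is square integrable.
\item From \eqref{2.8}, $\|\partial_t v\|_{\mathcal V^*}\le\mu\|\nabla v\|+\lambda\|u\nabla w\|_{\mathcal V^*}+\|\theta\|$. Here $\int u\nabla w\cdot\psi\le\|u\|_{L^4}\|\nabla w\|_{L^2}\|\psi\|_{L^4}$, so this term lies in $L^2(0,T)$.
\end{itemize}

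\textbf{The $H^3$ bound (main obstacle).} View \eqref{1.3f} as the Neumann problem $-\Delta u+u=w-f(u)+u$ with $\nabla u\cdot\vec n=0$. Elliptic regularity gives $\|u\|_{H^3}\le C(\|w\|_{H^1}+\|f(u)\|_{H^1}+\|u\|_{H^1})$. The difficulty is $\|\nabla(u^3)\|_{L^2}=3\|u^2\nabla u\|_{L^2}$, since $\nabla u$ is only in $L^\infty(L^2)$ a priori. I would do this in two steps:
\begin{enumerate}
\item Get $H^2$. Since $\|f(u)\|_{L^2}\le C\|u\|_{H^1}^3+C$ in three dimensions, $\|u\|_{H^2}\le C(\|w\|_{L^2}+1)$, which is in $L^2(0,T)$.
\item Bootstrap to $H^3$. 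Use $\|u^2\nabla u\|_{L^2}\le\|u\|_{L^\infty}^2\|\nabla u\|_{L^2}$ and Agmon's inequality $\|u\|_{L^\infty}^2\le C\|u\|_{H^1}\|u\|_{H^2}$. This gives $\|u^2\nabla u\|_{L^2}\le C\|u\|_{H^2}$, which is in $L^2(0,T)$.
\end{enumerate}
Hence $u\in L^2(0,T;H^3)$, and collecting the pieces gives \eqref{2.11}, with a constant depending only on the stated data and $T$.
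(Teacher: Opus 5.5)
Your proposal is correct and follows essentially the same route as the paper: the same energy test functions $v$, $\lambda w$, $\lambda\partial_t u$; the same treatment of the Oono term via $\int_\Omega uw\,dx=\|\nabla u\|_{L^2}^2+\int_\Omega (u^4-u^2)\,dx$ followed by Gronwall; the mean-value/Poincar\'e argument for $w$; duality for both time derivatives; and elliptic regularity from \eqref{1.3f} for the $H^3$ bound. Your version is also more careful at two points the paper glosses over: you keep $\int_\Omega F(u_0)\,dx$ in the initial energy (controlled by $H^1\hookrightarrow L^4$), and you justify the $L^2(0,T;H^3(\Omega))$ bound by an explicit $H^2$-then-$H^3$ bootstrap handling $\nabla(u^3)$, which the paper simply asserts.
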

\begin{proof}
    Step 1: Multiplying \eqref{1.3a}, \eqref{1.3e} and \eqref{1.3f} by $v$, $\lambda w$ and $\lambda \partial_t u$, respectively and integrating the resulting equalities over $\Omega$, we obtain
    \begin{align}
        &\frac{1}{2}\frac{d}{dt} \int_{\Omega} | v|^2 \, dx+  \mu \int_{\Omega} |\nabla  v|^2\, dx +\lambda \int_{\Omega} u \nabla w \cdot  v \, dx= \int_{ \Omega} \theta \cdot v \, dx, \label{2.13}\\
        & \lambda\int_{\Omega}\partial_t u w \,dx + \lambda \int_{\Omega} |\nabla w|^2 \,dx-\lambda\int_{\Omega} u  v \cdot \nabla w \, dx+ \alpha \lambda\int_{\Omega} u w \,dx=0,\label{2.14}\\
        &- \lambda\int_{\Omega} \partial_t u w \, dx+\frac{\lambda}{2} \frac{d}{dt}\int_{\Omega} |\nabla u|^2\, dx + \lambda \frac{d}{dt} \int_{\Omega} F(u) \, dx =0. \label{2.15}
    \end{align}
\end{proof}
Adding \eqref{2.13}, \eqref{2.14} and \eqref{2.15} and integrating over $(0,t)$, we get
\begin{align}
    \frac{1}{2}\int_{\Omega} | v(t)|^2\,dx+  \mu \int_{0}^{t}\int_{\Omega} |\nabla  v|^2\, \di(x,t)+  \lambda \int_{0}^{t}\int_{\Omega} |\nabla w|^2 \, \di(x,t)\nonumber\\
    +\frac{\lambda}{2} \int_{\Omega} |\nabla u(t)|^2 \, dx
    +\lambda\int_{\Omega} F(u(t))\, dx 
    + \alpha \lambda\int_{0}^{t}\int_{\Omega} u w \, \di(x,t)\nonumber\\
    \leq   \frac{1}{2}\int_{\Omega} | v(0)|^2\,dx
    +\frac{\lambda}{2} \int_{\Omega} |\nabla u(0)|^2 \, dx+ \frac{1}{2} \int_{0}^{t} || v(t)||^2_{L^2(\Omega)}\,dt
   \nonumber\\ + \frac{1}{2}\int_{0}^{t} ||\theta(t)||^2_{L^2(\Omega)}\,dt.\label{2.16}
\end{align}
Now,
\begin{align}
    &\alpha \lambda\int_{0}^{t}\int_{\Omega} u w \, \di(x,t) \nonumber\\
    =&\, \alpha \lambda\int_{0}^{t}\int_{\Omega} |\nabla u|^2 \,\di(x,t)+ \alpha \lambda\int_{0}^{t}\int_{\Omega} ((u)^4-(u)^2)\,\di(x,t) \notag\\
     =&\,\alpha \lambda\int_{0}^{t}\int_{\Omega} |\nabla u|^2 \,\di(x,t)+ \alpha \lambda\int_{0}^{t}\int_{\Omega} \Big((u)^2-\frac{1}{2}\Big)^2 \,\di(x,t)- C, \label{2.17}
\end{align}
where the constant $C >0$. 
 Hence, from \eqref{2.16} and \eqref{2.17}, we obtain 
\begin{align}
    \frac{1}{2}\int_{\Omega} | v(t)|^2\,dx+  \mu \int_{0}^{t}\int_{\Omega} |\nabla  v|^2\, \di(x,t)+  \lambda \int_{0}^{t}\int_{\Omega} |\nabla w|^2 \, \di(x,t)\nonumber\\
    +\frac{\lambda}{2} \int_{\Omega} |\nabla u(t)|^2 \, dx
    +\lambda\int_{\Omega} F(u(t))\, dx 
    + \alpha \lambda\int_{0}^{t}\int_{\Omega} |\nabla u|^2 \,\di(x,t)\nonumber\\
    + \alpha \lambda\int_{0}^{t}\int_{\Omega} \Big(u^2-\frac{1}{2}\Big)^2 \,\di(x,t) \leq \frac{1}{2} \int_{0}^{t} || v(t)||^2_{L^2(\Omega)}\,dt + C_1, \label{2.18}
\end{align}
where the constant $C_1=C_1(||v_0||_{L^2(\Omega)^n},||u_0||_{H^1(\Omega)}, || \theta||_{ L^2(0,T; L^2(\Omega)^n)},T)$.\\
\par Hence, Gronwall's inequality implies
\begin{equation*}
    || v(t)||_{L^2(\Omega)} \leq 2C_1e^T < \infty\,\, \text{a.e.}\,\, t \in (0,T).
\end{equation*}
Note that $F(u)=\frac{1}{4}((u)^2-1)^2\,\geq0$, then from inequality \eqref{2.18} $\int_{\Omega} F(u) \,dx \leq C_2$ implies
\begin{align*}
    \int_{\Omega} |u|^4\, dx\leq C_2+ 2\delta \int_{\Omega}|u|^4\, dx+ (C_\delta+1)|\Omega|.
\end{align*}
Choose $\delta=\frac{1}{4}$ in the above inequality then 
\begin{align*}
    ||u||_{L^\infty(0,T;L^4(\Omega))} < \infty.
\end{align*}
Hence, 
\begin{align}
     || v||_{L^2(0,T; L^2(\Omega)^n)}+||\nabla v||_{L^2((0,T)\times  \Omega)^{n \times n}}+||\nabla w||_{L^2((0,T)\times  \Omega)}+ ||\nabla u||_{L^2((0,T)\times  \Omega)}
   \notag\\+||u||_{L^\infty(0,T;L^4(\Omega))} \leq C_3 < \infty . \label{2.19}
\end{align}
Step 2: Now, choose $\psi \in \mathcal{V}$, then
 \begin{align}
     &\,|\langle\partial_t  v, \psi\rangle|\nonumber\\ \leq&  \,\mu || \nabla  v||_{L^2(\Omega)} || \nabla \psi||_{L^2(\Omega)}+||u||_{L^4(\Omega)}||\nabla w||_{L^2(\Omega)} ||\psi||_{L^4(\Omega)}+ ||\theta||_{L^2(\Omega)} || \psi||_{L^2(\Omega)}\notag\\
     \leq&\,  \Big(\mu || \nabla  v||_{L^2(\Omega)} + ||u||_{L^4(\Omega)}||\nabla w||_{L^2(\Omega)}+ ||\theta||_{L^2(\Omega)}\Big) ||\psi||_{\mathcal{V}} \nonumber.
 \end{align}
 Hence, 
 \begin{equation}
     \underset{||\psi||_{\mathcal{V}}\leq 1}{\text{sup}}  |\langle\partial_t  v, \psi\rangle| \leq  \mu || \nabla  v||_{L^2(\Omega)}+||u||_{L^4(\Omega)}||\nabla w||_{L^2(\Omega)} + ||\theta||_{L^2(\Omega)}. \label{2.20}
 \end{equation}
Now, integrating \eqref{2.20} on $(0,T)$ and using the estimate \eqref{2.18}, we get
\begin{equation}
    || \partial_t  v||_{L^2(0,T;\mathcal{V}^*)}\leq C_4 < \infty. \nonumber
\end{equation} 
Similarly,  for $ \phi \in H^1(\Omega)$
\begin{align}
   &\, |\langle\partial_t u, \phi\rangle|\nonumber\\
    \leq&\, ||\nabla w||_{L^2(\Omega)}|| \nabla \phi||_{L^2(\Omega)}+ ||u||_{L^4(\Omega)}||  v||_{L^4(\Omega)}||\nabla \phi||_{L^2(\Omega)}+ \alpha ||u||_{L^2(\Omega)} || \phi||_{L^2(\Omega)}\notag\\
    \leq&\, \Big(||\nabla w||_{L^2(\Omega)}+||u||_{L^4(\Omega)}||  v||_{L^4(\Omega)}+ \alpha ||u||_{L^2(\Omega)}\Big)|| \phi||_{H^1(\Omega)}. \nonumber
\end{align}
Hence,
\begin{align}
     \underset{||\phi||_{H^1(\Omega) }\leq 1}{\text{sup}} &|\langle\partial_t u, \phi\rangle| \leq ||\nabla w||_{L^2(\Omega)}+||u||_{L^4(\Omega)}||  v||_{L^4(\Omega)} \notag\\
    \Rightarrow \,& || \partial_t u||_{L^2(0,T;H^1(\Omega)^
    *)} \leq C_5 <\infty. \nonumber
\end{align}
Integrating \eqref{1.3f} over $\Omega$ to obtain
\begin{equation}
    \int_{\Omega} w \, dx= \int_{\Omega}f(u)\,dx= \int_{\Omega} \{(u)^3-u\} \, dx \leq C_6< \infty. \label{2.21}
    \end{equation}
    Now, Poincaré and triangle inequalities yield 
    \begin{align}
        ||w||_{L^2((0,T)\times  \Omega)} < \infty \Rightarrow ||w||_{L^2(0,T;H^1(\Omega))} \leq C_7< \infty. \nonumber
    \end{align}
Therefore from \eqref{1.3f}, we obtain $||\Delta u||_{L^2((0,T)\times  \Omega)}+ ||\nabla \Delta u||_{L^2((0,T)\times  \Omega)}\leq C_8 < \infty$.
     Hence, by the virtue of the regularity theory of elliptic partial differential equations of second order we obtain $|| u||_{L^2(0,T; H^3(\Omega))}\leq C_9 < \infty$, where all the constants $C_i=C_i(||v_0||_{L^2(\Omega)^n},||u_0||_{H^1(\Omega)}, || \theta||_{ L^2(0,T; L^2(\Omega)^n)}, T)$. An estimate for the pressure term will be provided in the following theorem.
    
\begin{thm}\label{thm 3.2}
        Let the assumptions (A1) -- (A4) hold true, then the problem $\mathcal{(P)}$ possesses at least one weak solution $(v,u,w)$ such that
        \begin{align*}
            &v \in L^2(0,T;\mathcal{V}), \, \partial_t v \in L^2 (0,T;\mathcal{V}^*);\\
           & u \in C ([0,T];H^1(\Omega))\cap L^2(0,T; H^3(\Omega)), \, \partial_t u \in L^2(0,T; H^1(\Omega)^*);\\
           & w \in L^2(0,T;H^1(\Omega)),
        \end{align*}
and $(v,u,w)$ satisfies the weak formulations \eqref{2.8}, \eqref{2.9} and \eqref{2.10}. Furthermore, there exists a pressure term $p:= \partial_t P$ associated to each weak solution $(v, u,w)$, satisfies the \eqref{1.3a} in the distributional sense. The pressure term $P \in L^\infty (0,T;L^2_0(\Omega))$ satisfies 
\begin{align*}
    \underset{t \in [0,T]}{\text{sup}}||\nabla P||_{H^{-1}(\Omega)^n} \leq C < \infty.
\end{align*}
    \end{thm}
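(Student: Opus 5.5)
We prove Theorem \ref{thm 3.2} by a Faedo--Galerkin approximation combined with the a-priori bounds of Theorem \ref{Theorem 3.1}, followed by compactness and de Rham's theorem for the pressure.

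\textbf{Galerkin approximation.} Let $\{\psi_k\}$ be the eigenfunctions of the Stokes operator $A$ in $\mathcal{V}$ and $\{\phi_k\}$ the eigenfunctions of the Neumann Laplacian in $H^1(\Omega)$. We seek $v_m=\sum_{k\le m} a_k(t)\psi_k$, $u_m=\sum_{k\le m} b_k(t)\phi_k$ and $w_m=\sum_{k\le m} c_k(t)\phi_k$ satisfying \eqref{2.8}--\eqref{2.10} with test functions in the finite-dimensional spans. The initial data are $v_m(0)=P_m v_0$ and $u_m(0)=Q_m u_0$, where $P_m,Q_m$ are the orthogonal projections. Eliminating $c_k$ through \eqref{2.10} turns the system into ODEs with locally Lipschitz right-hand side, since $f(u)=u^3-u$ is polynomial. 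Cauchy--Lipschitz therefore gives a local solution. The computation of Theorem \ref{Theorem 3.1} is legitimate at the Galerkin level, because $v_m$, $w_m$ and $\partial_t u_m$ are admissible test functions and the $\phi_k$ diagonalize the Laplacian. Hence the estimate \eqref{2.11} holds uniformly in $m$, which extends the solution to $[0,T]$. We also use $\|Q_mu_0\|_{H^1}\le \|u_0\|_{H^1}$ and $\|F(Q_mu_0)\|_{L^1}\le C$, the latter by $H^1\hookrightarrow L^4$.

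\textbf{Compactness and passage to the limit.} The uniform bounds yield, along a subsequence,
\begin{itemize}
\item $v_m \overset{w}{\rightharpoonup} v$ in $L^2(0,T;\mathcal{V})$ and $\partial_t v_m\overset{w}{\rightharpoonup}\partial_t v$ in $L^2(0,T;\mathcal{V}^*)$;
\item $u_m\overset{w^*}{\rightharpoonup}u$ in $L^\infty(0,T;H^1)$, $u_m \overset{w}{\rightharpoonup} u$ in $L^2(0,T;H^3)$ and $\partial_t u_m\overset{w}{\rightharpoonup}\partial_t u$ in $L^2(0,T;(H^1)^*)$;
\item $w_m\overset{w}{\rightharpoonup}w$ in $L^2(0,T;H^1)$.
\end{itemize}
The Aubin--Lions lemma, applied with $\mathcal{V}\hookrightarrow\hookrightarrow\mathcal{H}\hookrightarrow \mathcal{V}^*$ and $H^3\hookrightarrow\hookrightarrow H^2\hookrightarrow (H^1)^*$, gives $v_m\to v$ in $L^2(0,T;\mathcal{H})$ and $u_m\to u$ in $L^2(0,T;H^2)$. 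The $L^\infty(0,T;H^1)$ bound combined with $(H^1)^*$ equicontinuity also gives $u_m \to u$ in $C([0,T];L^2)$.

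We then pass to the limit in each nonlinear term.
\begin{itemize}
\item $u_m\nabla w_m\cdot\psi$: $u_m\to u$ strongly in $L^2(0,T;L^\infty)$ for $n\le3$, using $H^2\hookrightarrow L^\infty$, while $\nabla w_m\overset{w}{\rightharpoonup}\nabla w$ in $L^2$.
\item $u_mv_m\cdot\nabla\phi$: both factors converge strongly in $L^2$-type spaces, and the uniform bounds supply integrability.
\item $f(u_m)$: it converges in $L^1$ by strong convergence of $u_m$ and the uniform $L^\infty(0,T;L^6)$ bound.
\end{itemize}
The initial conditions are recovered from the time continuity $v\in C([0,T];\mathcal{V}^*)$ and $u\in C([0,T];(H^1)^*)$.

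\textbf{Regularity $u\in C([0,T];H^1)$.} By Lions--Magenes interpolation, $u\in L^2(0,T;H^3)$ together with $\partial_tu\in L^2(0,T;(H^1)^*)$ gives $u\in C([0,T];[H^3,(H^1)^*]_{1/2})=C([0,T];H^1)$. The interpolation space is identified using the Neumann Laplacian scale.

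\textbf{Pressure.} Set $V(t)=v(t)-v_0$ and
\[
G(t)=\int_0^t\big(\mu\Delta v-\lambda u\nabla w+\theta\big)\,ds .
\]
From \eqref{2.8}, $\langle V(t)-G(t),\psi\rangle=0$ for all $\psi\in\mathcal{V}$, and $V(t)-G(t)\in H^{-1}(\Omega)^n$. De Rham's theorem, in the form valid for bounded Lipschitz domains, yields a unique $P(t)\in L^2_0(\Omega)$ with $\nabla P(t)=-(V(t)-G(t))$ and $\|P(t)\|_{L^2}\le C\|\nabla P(t)\|_{H^{-1}}$. Each term of $V-G$ is bounded uniformly in $t$ in $H^{-1}$:
\begin{itemize}
\item $v\in L^\infty(0,T;L^2)$;
\item $\int_0^t\Delta v\,ds$ is controlled by $\|\nabla v\|_{L^1(0,T;L^2)}$;
\item $\int_0^t u\nabla w\,ds$ is controlled by $\|u\|_{L^\infty L^4}\|\nabla w\|_{L^2L^2}\sqrt T$ in $L^{4/3}\hookrightarrow H^{-1}$;
\item $\int_0^t\theta\,ds$ is controlled by $\sqrt T\|\theta\|_{L^2L^2}$.
\end{itemize}
This gives $\sup_t\|\nabla P\|_{H^{-1}}\le C$ and $P\in L^\infty(0,T;L^2_0)$. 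Differentiating in the sense of distributions then shows that $p=\partial_tP$ satisfies \eqref{1.3a}.

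The main obstacle is justifying the energy estimate uniformly at the Galerkin level. In particular, one must handle the nonconserved mean of $u$ coming from the Oono term when bounding $\int_\Omega w$, and obtain the measurability in time of $P$, which follows from linearity and continuity of the de Rham inverse.
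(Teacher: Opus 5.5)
Your proposal is correct and follows essentially the same route as the paper: a Faedo--Galerkin scheme in Stokes and Neumann-Laplacian eigenfunctions, the energy estimate of Theorem \ref{Theorem 3.1} obtained uniformly at the discrete level, weak limits, and the pressure recovered from the time-integrated momentum equation via de Rham's theorem (the paper cites Proposition III.1.1 of Temam). Your version also supplies several points the paper leaves implicit or handles loosely: the Aubin--Lions passage to the limit in the nonlinear terms, the continuity $u\in C([0,T];H^1(\Omega))$ via Lions--Magenes interpolation, the $F(Q_m u_0)$ term in the initial energy, and the pressure bound via $L^{4/3}\hookrightarrow H^{-1}$ with $\|\nabla w\|_{L^2}$ rather than the unavailable $\|\nabla w\|_{L^4}$.
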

    \begin{proof} Step 1:
Let $\{\eta_i\}_{i=1}^\infty$ be a family of eigenfunctions  to the Stokes operator $A$ with the corresponding non-decreasing sequence of eigenvalues $\{\kappa_i\}_{i=1}^\infty$ such that $A\eta_i=\kappa_i \eta_i$.  $\{\kappa_i\}_{i=1}^\infty$ are complete orthonormal basis of $\mathcal{H}$ and orthogonal to $\mathcal{V}$. Further assume that  $\{\xi_i\}$ be the orthonormal base in $L^2(\Omega)$ being composed of the eigenfunctions of the operator $-\Delta$ with the boundary condition \eqref{1.3g}. 
\par Next, take $X_N=$ span $\{\xi_1, \xi_2,\cdots, \xi_N\}$, $Y_N=$ span $\{\eta_1, \eta_2, \cdots, \eta_N\}$. Let $P_{X_N}$ and $P_{Y_N}$ be the orthogonal projectors from $L^2(\Omega)$ to $X_N$ and $\mathcal{H}$ to $Y_N$, respectively. Then we look for functions 
\begin{align*}
    u_N=\displaystyle \sum_{i=1}^N a_{N_i}(t) \xi_i \in X_N,\, w_N=\displaystyle \sum_{i=1}^N b_{N_i}(t) \xi_i \in X_N\,\, \text{and} \,\, v_N=\displaystyle \sum _{i=1}^N c_{N_i}(t)\eta_i \in Y_N
\end{align*}
solve the following approximate problem

\begin{align}
    &  \langle\partial_t  v_N, \xi \rangle +  \mu \int_{\Omega} \nabla  v_N  : \nabla \xi \, dx =-\lambda \int_{ \Omega} u_N \nabla w_N \cdot \xi\,dx + \int_{\Omega} \theta \cdot \xi,\label{eq3.14}\\
     &  \langle\partial_t u_N, \psi \rangle \,+ \int_{\Omega} \nabla w_N \cdot \nabla \psi\, dx + \alpha \int_{ \Omega} u_N \psi \,dx= \int_{ \Omega}u_N  v_N \cdot \nabla \psi \,dx,\label{eq3.15}\\
     &\int_{\Omega}w_N \zeta \,dx =\int_{ \Omega}\nabla u_N \cdot \nabla \zeta \, dx+ \int_{\Omega} f(u_N)\zeta\, dx\label{eq3.16} 
\end{align}
for any $\xi \in Y_N$ and $\psi, \zeta \in X_N$
with $a_{N_i}(0)=  \langle u_0, \xi_i \rangle \,;\,\,   c_{N_i}(0)= \langle  v_0, \eta_i \rangle $ for all $i=1,2,\cdots N$ and $\theta \in L^2(0,T;\mathcal{H})$. Note that $u_{N}(0)= \displaystyle\sum_{n=1}^N \langle u_0, \xi_i \rangle \xi_i $ and $ v_{N}(0)=\displaystyle\sum_{i=1}^N \langle  v_0, \eta_i \rangle \eta_i$. Observe that the approximating problem becomes a Cauchy problem for a system of ordinary differential equations with $2N$ unknowns.


\par Choose $\xi = v_N, \, \psi=w_N$ and $\zeta= \partial_t u_N$, then we obtain
\begin{align}
    \frac{1}{2}\int_{\Omega} | v_N(t)|^2\,dx+  \mu \int_{0}^{t}\int_{\Omega} |\nabla  v_N|^2\, \di(x,t)+  \lambda \int_{0}^{t}\int_{\Omega} |\nabla w_N|^2 \, \di(x,t)\nonumber\\
    +\frac{\lambda}{2} \int_{\Omega} |\nabla u_N(t)|^2 \, dx
    +\lambda\int_{\Omega} F(u_N(t))\, dx 
    + \alpha \lambda\int_{0}^{t}\int_{\Omega} u_N w_N \, \di(x,t)\nonumber\\
    \leq   \frac{1}{2}\int_{\Omega} | v_N(0)|^2\,dx+\frac{\lambda}{2} \int_{\Omega} |\nabla u_N(0)|^2 \, dx+ \frac{1}{2} \int_{0}^{t} || v_N(t)||^2_{L^2(\Omega)}\,dt\nonumber\\
    + \frac{1}{2}\int_{0}^{t} ||\theta(t)||^2_{L^2(\Omega)}\,dt. \label{equ3.14}
\end{align}
Now, 
\begin{align}
   &\, \alpha \lambda\int_{0}^{t}\int_{\Omega} u_N w_N \, \di(x,t) \nonumber\\
   = &\,\alpha \lambda\int_{0}^{t}\int_{\Omega} |\nabla u_N|^2 \,\di(x,t)+ \alpha \lambda\int_{0}^{t}\int_{\Omega} ((u_N)^4-(u_N)^2)\,\di(x,t) \notag\\
    =&\, \alpha \lambda\int_{0}^{t}\int_{\Omega} |\nabla u_N|^2 \,\di(x,t)+ \alpha \lambda\int_{0}^{t}\int_{\Omega} \Big((u_N)^2-\frac{1}{2}\Big)^2 \,\di(x,t)- C, \label{equ3.15}
\end{align}
where the constant $C >0$. 
 Hence from \eqref{equ3.14} and \eqref{equ3.15}, we obtain 
\begin{align}
    \frac{1}{2}\int_{\Omega} | v_N(t)|^2\,dx+  \mu \int_{0}^{t}\int_{\Omega} |\nabla  v_N|^2\, \di(x,t)+  \lambda \int_{0}^{t}\int_{\Omega} |\nabla w_N|^2 \, \di(x,t)\nonumber\\
    +\frac{\lambda}{2} \int_{\Omega} |\nabla u_N(t)|^2 \, dx
    +\lambda\int_{\Omega} F(u_N(t))\, dx 
    + \alpha \lambda\int_{0}^{t}\int_{\Omega} |\nabla u_N|^2 \,\di(x,t)\nonumber\\
    + \alpha \lambda\int_{0}^{t}\int_{\Omega} \Big(u^2_N-\frac{1}{2}\Big)^2 \,\di(x,t) \leq \frac{1}{2} \int_{0}^{t} || v_N(t)||^2_{L^2(\Omega)}\,dt + C_1, \label{equ3.16}
\end{align}
where the constant $C_1 > 0$.
Hence, Gronwall's inequality implies
\begin{equation*}
    || v_N(t)||_{L^2(\Omega)} < \infty\,\, \text{a.e.}\,\, t \in (0,T).
\end{equation*}
Note that $F(u_N)=\frac{1}{4}((u_N)^2-1)^2\,\geq0$, then from inequality \eqref{equ3.16} $\int_{\Omega} F(u_N) \,dx \leq C_2$ implies
\begin{align*}
    \int_{\Omega} |u_N|^4\, dx\leq C_2+ 2\delta \int_{\Omega}|u_N|^4\, dx+ (C_\delta+1)|\Omega|.
\end{align*}
Choose $\delta=\frac{1}{4}$ in the above inequality then 
\begin{align*}
    ||u_N||_{L^\infty(0,T;L^4(\Omega))} < \infty.
\end{align*}
Hence, 
\begin{align}
     || v_N||_{L^2(0,T; L^2(\Omega)^n)}+||\nabla v_N||_{L^2((0,T)\times  \Omega)^{n \times n}}+||\nabla w_N||_{L^\infty(0,T; L^2(\Omega))}\nonumber\\
     + ||\nabla u_N||_{L^2((0,T)\times  \Omega)}
   +||u_N||_{L^\infty(0,T;L^4(\Omega))}  < \infty . \label{equ3.17}
\end{align}
Next choose $\zeta= w_N$, then we get 
\begin{align*}
   &\int_{\Omega} w_N^2\,dx= \int_{\Omega}\nabla u_N \cdot \nabla w_N\, dx+ \int_{\Omega}f(u_N)w_N\,dx\\
    & \hspace{1.5cm} \leq ||\nabla u_N||_{L^2(\Omega)}|| \nabla w_N||_{L^2(\Omega)}+ ||f(u_N)||_{L^2(\Omega)}||w_N||_{L^2(\Omega)}\\
    \Rightarrow &\,\,||w_N||_{L^2((0,T)\times  \Omega)} < \infty.
\end{align*}
Hence, the Poincaré inequality yields
\begin{align*}
    ||w_N||_{L^2(0,T;H^1(\Omega))} < \infty.
\end{align*}
Further choose $\zeta = \Delta u_N$, then
\begin{align*}
&\int_{\Omega} w_N \Delta u_N \, dx= \int_{\Omega} \nabla u_N \cdot \nabla (\Delta u_N)\, dx+ \int_{\Omega}f(u_N) \Delta u_N\,dx\\
\Rightarrow& \int_{\Omega} (\Delta u_N)^2 \, dx \leq ||w_N||_{L^2(\Omega)}||\Delta u_N||_{L^2(\Omega)}+ ||f(u_N)||_{L^2(\Omega)}||\Delta u_N||_{L^2(\Omega)}\\
\Rightarrow& \,\,||\Delta u_N||_{L^2((0,T)\times  \Omega)} < \infty.
\end{align*}
Step 2: Now, choose any $\xi \in \mathcal{V}$, then
 \begin{align}
    &\, |\langle\partial_t  v_N, \xi\rangle|\nonumber\\
    \leq &\, \mu || \nabla  v_N||_{L^2(\Omega)} || \nabla \xi||_{L^2(\Omega)}+||u_N||_{L^4(\Omega)}||\nabla w_N||_{L^2(\Omega)} ||\xi||_{L^4(\Omega)}+ ||\theta||_{L^2(\Omega)} || \xi||_{L^2(\Omega)}\notag\\
     \leq&\,  \Big(\mu || \nabla  v_N||_{L^2(\Omega)} + ||u_N||_{L^4(\Omega)}||\nabla w_N||_{L^2(\Omega)}+ ||\theta||_{L^2(\Omega)}\Big) ||\xi||_{\mathcal{V}} \nonumber.
 \end{align}
 Hence, 
 \begin{equation}
     \underset{||\xi||_{\mathcal{V}}\leq 1}{\text{sup}}  |\langle\partial_t  v_N, \xi\rangle| \leq  \mu || \nabla  v_N||_{L^2(\Omega)}+||u_N||_{L^4(\Omega)}||\nabla w_N||_{L^2(\Omega)} + ||\theta||_{L^2(\Omega)}. \label{equ3.18}
 \end{equation}
Now, integrating \eqref{equ3.18} on $(0,T)$ and using the estimate \eqref{equ3.17}, we get
\begin{equation}
    || \partial_t  v_N||_{L^2(0,T;\mathcal{V}^*)}  < \infty. \nonumber
\end{equation}
 
Similarly,  for any $ \psi \in H^1(\Omega)$
\begin{align}
    &\,|\langle\partial_t u_N, \psi\rangle| \nonumber\\
     \leq&\, ||\nabla w_N||_{L^2(\Omega)}|| \nabla \psi||_{L^2(\Omega)}+ ||u_N||_{L^4(\Omega)}||  v_N||_{L^4(\Omega)}||\nabla \psi||_{L^2(\Omega)}+ \alpha ||u_N||_{L^2(\Omega)} || \psi||_{L^2(\Omega)}\notag\\
     \leq &\, \Big(||\nabla w_N||_{L^2(\Omega)}+||u_N||_{L^4(\Omega)}||  v_N||_{L^4(\Omega)}+ \alpha ||u_N||_{L^2(\Omega)}\Big)|| \psi||_{H^1(\Omega)}. \nonumber
\end{align}
Hence,
\begin{align}
     \underset{||\psi||_{H^1(\Omega) }\leq 1}{\text{sup}} &|\langle\partial_t u_N, \psi\rangle| \leq ||\nabla w_N||_{L^2(\Omega)}+||u_N||_{L^4(\Omega)}||  v_N||_{L^4(\Omega)}+ \alpha ||u_N||_{L^2(\Omega)} \notag\\
    \Rightarrow \,& || \partial_t u_N||_{L^2(0,T;H^1(\Omega)^
    *)} < \infty. \nonumber
\end{align}

    
 Hence, by the virtue of the regularity theory of elliptic partial differential equations of second order, we obtain
    \begin{align*}
        || u_N||_{L^2(0,T; H^3(\Omega))} < \infty.
    \end{align*}
    Hence, we obtain
\begin{align*}
   & \{ v_N\}^\infty_{N=1} \text{  is uniformly bounded in $L^2(0,T; \mathcal{V})$},\\
    &\{ u_N\}^\infty_{N=1} \text{  is uniformly bounded in $L^\infty(0,T;H^1(\Omega)) \cap L^2(0,T; H^3(\Omega))$},\\
    & \{ w_N\}^\infty_{N=1} \text{ is uniformly bounded in $L^2(0,T; H^1(\Omega))$},\\
    &\{ \frac{\partial v_N}{\partial t}\}^\infty_{N=1} \text{  is uniformly bounded in $L^2(0,T;\mathcal{V}^*)$},\\
    &\{ \frac{\partial u_N}{\partial t}\}^\infty_{N=1} \text{  is uniformly bounded in $L^2(0,T;H^1(\Omega)^*)$}.
\end{align*}
Then, there exist
\begin{align*}
    &v \in L^2(0,T; \mathcal{V}),\\
    & u \in L^\infty(0,T;H^1(\Omega)) \cap L^2(0,T; H^3(\Omega)),\\
    & w \in L^2(0,T; H^1(\Omega)),\\
    & \frac{\partial v}{\partial t} \in L^2(0,T;\mathcal{V}^*),\\
    & \frac{\partial u}{\partial t}  \in L^2(0,T;H^1(\Omega)^*)
\end{align*}
and subsequences $\{ v_N\}^\infty_{N=1},\,\{ u_N\}^\infty_{N=1},\, \{ w_N\}^\infty_{N=1},\, \{ \frac{\partial v_N}{\partial t}\}^\infty_{N=1} ,\, \{ \frac{\partial u_N}{\partial t}\}^\infty_{N=1}$ (still indexed by same symbol) such that 
\begin{align*}
    &v_N \overset{w}{\rightharpoonup} v \,\,\text{in}\,\, L^2(0,T; \mathcal{V}),\\
     &u_N \overset{w}{\rightharpoonup} u \,\,\text{in}\,\, L^2(0,T; H^3(\Omega)),\\
     & u_N \overset{w*}{\rightharpoonup} u \,\,\text{in}\,\, L^\infty(0,T;H^1(\Omega)),\\
     & w_N \overset{w}{\rightharpoonup} w \,\,\text{in}\,\, L^2(0,T; H^1(\Omega)),\\
     &  \frac{\partial v_N}{\partial t} \overset{w}{\rightharpoonup} \frac{\partial v}{\partial t}\,\,\text{in}\,\, L^2(0,T;\mathcal{V}^*),\\
     & \frac{\partial u_N}{\partial t} \overset{w}{\rightharpoonup} \frac{\partial u}{\partial t}\,\,\text{in}\,\, L^2(0,T;H^1(\Omega)^*).
\end{align*}
Now, passing these limits in the approximate problem \eqref{eq3.14} -- \eqref{eq3.16}, we get a weak solution. \\
\par Step 2: Now using the proposition III.1.1 of \cite{temam2001navier}, the identity \eqref{2.8} implies the existence of pressure term $p:= \partial_t P \in W^{-1, \infty}(0,T;L^2_0(\Omega))$ such that 
\begin{align*}
    -\int_{\Omega}P(t) \nabla \cdot \psi dx=& - \int_{\Omega} (v(t)-v_0) \cdot \psi \,dx- \mu \int_{0}^{t} \int_{\Omega} \nabla v(s) : \nabla \psi \, \di(x,s)\\
    &-\int_{0}^{t} \int_{\Omega} u \nabla w \cdot \psi \, \di(x,s)- \int_{0}^{t} \int_{\Omega} \theta \cdot \psi \, \di(x,s) \,\, \text{for all}\,\, \psi \in H^1_0(\Omega)^n.
\end{align*}
Therefore, we obtain 
\begin{align*}
   &\, \langle \nabla P(t), \psi \rangle\nonumber\\
   \leq & \,\Big(|| v(t)||_{L^2(\Omega)} - ||v_0||_{L^2(\Omega)}\Big)||\psi||_{L^2(\Omega)}+ \mu \int_{0}^{t} ||\nabla v||_{L^2(\Omega)}||\nabla \psi||_{L^2(\Omega)}\,ds\\ 
    &\,+ \int_{0}^{t} || u||_{L^4(\Omega)} ||\nabla w||_{L^4(\Omega)} ||\psi||_{L^4(\Omega)}\, ds+ \int_{0}^{t}  ||\theta||_{L^2(\Omega)} ||\psi||_{L^2(\Omega)}\,ds.
    \end{align*}
Hence from step 1, it follows that
\begin{align*}
    \langle \nabla P(t), \psi\rangle \leq C||\psi||_{H^1_0(\Omega)}.
\end{align*}
\end{proof}

\begin{thm}\label{thm 3.3}
    Let $(v_1,u_1,w_1)$ and $(v_2,u_2,w_2)$ be two solutions of $\mathcal{(P)}$ for given $\theta_1$, $\theta_2 \in \mathcal{U}$, respectively. Then the following estimate holds:
\begin{align}
    || u_1-u_2||_{L^2(0,T; H^3(\Omega))}+ || \partial_t u_1- \partial_t u_2||_{L^2(0,T; H^1(\Omega)^*)}+||\nabla w_1-\nabla w_2||_{L^2((0,T)\times  \Omega)} \nonumber\\
    +||v_1-v_2||_{L^2(0,T; \mathcal{V})}
    +|| \nabla u_1-\nabla u_2||_{L^2((0,T)\times  \Omega)}+ || \partial_t v_1- \partial_t v_2||_{L^2(0,T; \mathcal{V}^*)}\nonumber\\
    \leq C ||\theta_1- \theta_2||_{L^2((0,T)\times   \Omega)},\label{2.28}
\end{align}
where the constant $C>0$ depends on $K,\, T$ and the initial data of the system $\mathcal{(P)}$.
\end{thm}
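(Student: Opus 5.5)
Set $v=v_1-v_2$, $u=u_1-u_2$, $w=w_1-w_2$, $\theta=\theta_1-\theta_2$. Subtracting the two copies of $\mathcal{(P)}$ gives
\begin{align*}
&\partial_t v-\mu\Delta v+\nabla p=-\lambda(u\nabla w_1+u_2\nabla w)+\theta,\\
&\partial_t u+v\cdot\nabla u_1+v_2\cdot\nabla u+\alpha u=\Delta w,\qquad w=-\Delta u+f(u_1)-f(u_2),
\end{align*}
with zero initial data. The plan is a Gronwall estimate on the energy $\frac12\|v\|^2_{L^2}+\frac{\lambda}{2}\|\nabla u\|^2_{L^2}$ plus a zero-mean correction. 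Afterwards the remaining norms follow by duality and elliptic regularity, exactly as in Step 2 of Theorem \ref{Theorem 3.1}. All constants depend only on the uniform bounds \eqref{2.11} for $(v_i,u_i,w_i)$, which are controlled through (A3) by $K$, $T$ and the initial data.

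\textbf{Step 1 (energy estimate).} Test the velocity equation with $v$, the $u$-equation with $\lambda w$, and the $w$-equation with $\lambda\partial_t u$, then add. The time-derivative terms cancel as in \eqref{2.13}--\eqref{2.15}. The result contains $\lambda\int(f(u_1)-f(u_2))\partial_t u$, which I rewrite as $\lambda\int w\,\partial_t u$-type terms. A cleaner alternative is to test the $u$-equation with $-\Delta u$ (equivalently with $w-(f(u_1)-f(u_2))$). Since $f(u_1)-f(u_2)=u(u_1^2+u_1u_2+u_2^2-1)$ and $u_i\in L^\infty(0,T;H^1)\subset L^\infty(0,T;L^6)$, we get
\[
\|\nabla(f(u_1)-f(u_2))\|_{L^2}\le C(1+\|u_1\|_{H^2}^2+\|u_2\|_{H^2}^2)\|u\|_{H^1}.
\]
This is enough to absorb the term into $\frac{\lambda}{2}\|\nabla w\|^2$ with a time-integrable Gronwall weight, because $u_i\in L^2(0,T;H^3)$.

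The coupling terms $\lambda\int u_2\nabla w\cdot v$ from the velocity equation and $-\lambda\int u_2 v\cdot\nabla w$ from the Cahn--Hilliard equation cancel exactly. This mirrors the cancellation in \eqref{2.13}--\eqref{2.14}, and it is the reason for pairing the tests this way.

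The remaining terms are handled as follows:
\begin{itemize}
\item $\lambda\int u\nabla w_1\cdot v$ is bounded by $\|u\|_{L^6}\|\nabla w_1\|_{L^3}\|v\|_{L^2}$, or after integration by parts using $w_1\in L^2(H^1)$ and $u_1\in L^2(H^3)$.
\item $\int(v\cdot\nabla u_1)w$ and $\int(v_2\cdot\nabla u)w$ are bounded by Hölder, Ladyzhenskaya/Agmon-type inequalities in $n\le3$, and Young. The $\|\nabla v\|^2$ and $\|\nabla w\|^2$ pieces are absorbed by the dissipation, and the rest has the form $g(t)(\|v\|^2+\|u\|^2_{H^1})$ with $g\in L^1(0,T)$.
\item $\int\theta\cdot v$ is bounded by $\frac12\|\theta\|^2+\frac12\|v\|^2$.
\end{itemize}

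\textbf{Step 2 (mean of $u$).} Mass is not conserved, so the mean must be controlled separately. Testing the $u$-equation with $1$ gives $\frac{d}{dt}\bar u+\alpha\bar u=0$, because $\int v\cdot\nabla u_1=\int v_2\cdot\nabla u=0$ by incompressibility and the boundary conditions. With $\bar u(0)=0$ this gives $\bar u\equiv0$, so Poincaré converts $\|\nabla u\|_{L^2}$ into a full $H^1$ bound. Gronwall then yields
\[
\|v\|_{L^\infty L^2}+\|v\|_{L^2\mathcal{V}}+\|u\|_{L^\infty H^1}+\|\nabla w\|_{L^2L^2}\le C\|\theta\|_{L^2L^2}.
\]

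\textbf{Step 3 (remaining norms).} Control the mean of $w$ by integrating the $w$-equation: $\int w=\int(f(u_1)-f(u_2))$, which is bounded by $C\|u\|_{L^2}$. This gives $w\in L^2(H^1)$. The relation $-\Delta u=w-(f(u_1)-f(u_2))$ with Neumann elliptic regularity gives the $L^2(H^3)$ bound, using the gradient estimate for $f(u_1)-f(u_2)$ above. The dual bounds on $\partial_t v$ and $\partial_t u$ come by testing with $\psi\in\mathcal V$ and $\phi\in H^1$, as in Step 2 of Theorem \ref{Theorem 3.1}.

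\textbf{Main obstacle.} I expect the main difficulty to be the trilinear convective and surface-tension terms $\int(v\cdot\nabla u_1)w$ and $\int u\nabla w_1\cdot v$ in three dimensions. To keep the Gronwall weight integrable I plan to move derivatives onto $u_1$ and use $u_1\in L^2(H^3)\subset L^2(W^{1,\infty})$, so that
\[
\Big|\int(v\cdot\nabla u_1)w\Big|\le\|\nabla u_1\|_{L^\infty}\|v\|_{L^2}\|w\|_{L^2},
\]
where $\|w\|_{L^2}$ is in turn bounded by $\|\nabla w\|_{L^2}+C\|u\|_{L^2}$.
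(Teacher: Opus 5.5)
Your plan follows the paper's proof. Your ``cleaner alternative'' of testing the velocity difference with $v$ and the Cahn--Hilliard difference with $-\lambda\Delta u$ is exactly the paper's choice; the paper keeps $\lambda\|\nabla\Delta u\|^2_{L^2}$ as dissipation where you keep $\lambda\|\nabla w\|^2_{L^2}$. Both then close with Gronwall and recover $\nabla w$, the $L^2(0,T;H^3(\Omega))$ norm and the dual norms of the time derivatives. Your explicit proof that $\bar u\equiv 0$ supplies the Poincar\'e step the paper uses tacitly, and, unlike the paper, your argument never needs $\mu>\lambda$.

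One claim is wrong as written. With your splittings $u\nabla w_1+u_2\nabla w$ and $v\cdot\nabla u_1+v_2\cdot\nabla u$, the transport term tested against $\lambda w$ gives $-\lambda\int_\Omega u_1\,v\cdot\nabla w\,dx$, not $-\lambda\int_\Omega u_2\,v\cdot\nabla w\,dx$. So the coupling terms do not cancel; they leave the remainder $\lambda\int_\Omega u\,v\cdot\nabla w\,dx$. This is harmless, and there are two fixes:
\begin{itemize}
\item split the transport as $v_1\cdot\nabla u+v\cdot\nabla u_2$, as in the paper's difference system, which restores exact cancellation with your velocity splitting; or
\item bound the remainder by $\epsilon\|\nabla w\|^2_{L^2}+C(\|u_1\|^2_{H^2}+\|u_2\|^2_{H^2})\|v\|^2_{L^2}$, whose weight is integrable because $u_i\in L^2(0,T;H^3(\Omega))$.
\end{itemize}

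Similarly, \eqref{2.11} gives no control of $\|\nabla w_1\|_{L^3}$, so drop that bound. Use your integration-by-parts variant instead, $\bigl|\lambda\int_\Omega w_1\,v\cdot\nabla u\,dx\bigr|\le C\|w_1\|_{H^1}\|\nabla v\|_{L^2}\|\nabla u\|_{L^2}$, which closes with the integrable weight $\|w_1\|^2_{H^1}$.
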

\begin{proof}
Let $(v_1,u_1,w_1)$ and $(v_2,u_2,w_2)$ be two solutions of $\mathcal{(P)}$ with the same initial condition and given $\theta_1$, $\theta_2$, respectively. Define $v:= v_1-v_2$, $u:=u_1-u_2$, $w:=w_1-w_2$ and $\theta:=\theta_1-\theta_2 $  for $t \in (0,T)$, then $(v, u, w)$ satisfies
\begin{subequations}
\begin{align}
&\partial_t v- \mu\Delta v\;\;=\;\; - \lambda u_1 \nabla w- \lambda u \nabla w_2 + \theta && \textnormal { in }\quad (0,T)\times \Omega,\label{2.29}\\
&\hspace{0.9cm}\nabla \cdot v\;\;=\;\;0 && \textnormal { in }\quad (0,T)\times \Omega,\label{2.30}\\
&\hspace{1.5cm}v\;\;=\;\;0 && \textnormal { on }\quad (0,T)\times \partial \Omega,\label{2.31}\\
&\hspace{0.9cm}v (0,x) =0 && \text{in }\quad \Omega,\label{2.32}\\
&\,\partial_t u +  v_1 \cdot \nabla u+  v \cdot \nabla u_2+\alpha u  =  \Delta w && \text{in }\quad (0,T)\times \Omega,\label{2.33}\\
&\,w =-\Delta u+f(u_1)-f(u_2) && \text{in }\quad (0,T)\times \Omega, \label{2.34}\\
&\hspace{0.9cm}\nabla u \cdot \vec{n} =0 && \text{on } \quad (0,T)\times \partial\Omega,\label{2.35}\\
&\hspace{0.9cm}\nabla w \cdot \vec{n}=0 && \text{on }\quad (0,T)\times \partial\Omega,\label{2.36}\\
&\hspace{0.9cm}u(0,x)=0 && \text{in }\quad \Omega.\label{2.37}
\end{align}
\end{subequations}
Multiplying \eqref{2.29} and \eqref{2.33} by $v$ and $-\lambda \Delta u$, respectively and integrating the resulting equalities, we obtain
    \begin{align*}
        &\frac{1}{2}  \frac{d}{dt}||v||^2_{L^2(\Omega)}+ \mu ||\nabla v||^2_{L^2(\Omega)}+\frac{\lambda}{2}\frac{d}{dt}||\nabla u||^2_{L^2(\Omega)}\\
        =& -\lambda \int_{\Omega}(\Delta w - v_1 \cdot \nabla u-  v \cdot \nabla u_2-\alpha u)\Delta u\,dx+ \int_{\Omega} \theta \cdot v\,dx+\lambda \int_{\Omega} v \cdot \nabla u w_1\,dx\\
        & \hspace{5cm}+ \lambda \int_{\Omega} v \cdot \nabla u_2(-\Delta u+f(u_1)-f(u_2)) \,dx\\
        =&-\lambda \int_{\Omega} \Delta w \Delta u\,dx+ \lambda \int_{\Omega} v_1 \cdot \nabla u \Delta u\,dx+ \lambda \alpha \int_{\Omega} u \Delta u\, dx+ \int_{\Omega} \theta \cdot v\, dx\\
        &\,\hspace{2cm} + \lambda \int_{\Omega} v \cdot \nabla u w_1\,dx+ \lambda \int_{\Omega} v \cdot \nabla u_2(f(u_1)-f(u_2)) \,dx \\
        = &\, -\lambda \int_{\Omega} \Delta u \Delta(-\Delta u+f(u_1)-f(u_2))\,dx+  \lambda \int_{\Omega} v_1 \cdot \nabla u \Delta u\,dx\nonumber\\
        &+ \lambda \alpha \int_{\Omega} u \Delta u\, dx+ \int_{\Omega} \theta \cdot v\, dx+ \lambda \int_{\Omega} v \cdot \nabla u w_1\,dx+ \lambda \int_{\Omega} v \cdot \nabla u_2(f(u_1)-f(u_2)) \,dx\\
        =&-\lambda ||\nabla \Delta u||^2_{L^2(\Omega)}+ \lambda\int_{\Omega} \nabla (f(u_1)-f(u_2))\cdot \nabla \Delta u\,dx+  \lambda \int_{\Omega} v_1 \cdot \nabla u \Delta u\,dx\\
        &+ \lambda \alpha \int_{\Omega} u \Delta u\, dx+ \int_{\Omega} \theta \cdot v\, dx+ \lambda \int_{\Omega} v \cdot \nabla u w_1\,dx+ \lambda \int_{\Omega} v \cdot \nabla u_2(f(u_1)-f(u_2)) \,dx\\
         \leq & -\lambda ||\nabla \Delta u||^2_{L^2(\Omega)}+ \lambda|| \nabla (f(u_1)-f(u_2))||_{L^2(\Omega)}|| \nabla \Delta u||_{L^2(\Omega)} + \alpha \lambda || \nabla u||^2_{L^2(\Omega)}\\
         &+ \lambda|| \nabla \Delta u||_{L^2(\Omega)} ||v_1||_{L^4(\Omega)} || u||_{L^4(\Omega)}
       +\lambda ||\nabla w_1||_{L^2(\Omega)}||v||_{L^4(\Omega)}||u||_{L^4(\Omega)}\\
        &+||\theta||_{L^2(\Omega)}|| v||_{L^2(\Omega)}+ \lambda || v||_{L^4(\Omega)} || u_2||_{L^4(\Omega)}||\nabla (f(u_1)-f(u_2))||_{L^2(\Omega)}.
    \end{align*}
  Now,
  \begin{align*}
      &||\nabla (f(u_1)-f(u_2))||_{L^2(\Omega)}\\
      =& \,||f'(u_1)\nabla u_1- f'(u_2) \nabla u_2||_{L^2(\Omega)}\\
      \leq&\,|| f'(u_1) \nabla u||_{L^2(\Omega)}+ || (f'(u_1)-f'(u_2)) \nabla u_2||_{L^2(\Omega)} \\
      \leq&\,||f'(u_1)||_{L^3(\Omega)}|| \nabla u||_{L^6(\Omega)}+3||u_1+u_2||_{L^6(\Omega)}||u||_{L^6(\Omega)} ||\nabla u_2||_{L^6(\Omega)}\\
      \leq &\, C_1 ||f'(u_1)||_{L^3(\Omega)} || \nabla u||^{\frac{1}{2}}_{L^2(\Omega)}|| \nabla \Delta u||^{\frac{1}{2}}_{L^2(\Omega)}\\
      &\hspace{1cm}+ C_2||u_1+u_2||_{L^6(\Omega)}||\nabla u_2||_{L^6(\Omega)}|| \nabla u||_{L^2(\Omega)}.
  \end{align*}
  This implies that
  \begin{align*}
      &\frac{1}{2}  \frac{d}{dt}||v||^2_{L^2(\Omega)}+ (\mu-\lambda) ||\nabla v||^2_{L^2(\Omega)}+\frac{\lambda}{2}\frac{d}{dt}||\nabla u||^2_{L^2(\Omega)}+ \frac{\lambda}{2} ||\nabla \Delta u||^2_{L^2(\Omega)}\nonumber\\
      \leq&\, C_4 \Big(||v||^2_{L^2(\Omega)}+\lambda ||\nabla u||^2_{L^2(\Omega)}\Big)+ \frac{1}{2}|| \theta||^2_{L^2(\Omega)},\label{2.38}
  \end{align*}
  where 
  \begin{align*}
      C_4=C\Big\{\Big(1+||u_2||^2_{L^4(\Omega)}\Big)\Big(||f'(u_1)||^4_{L^2(\Omega)}+ ||u_1 +u_2||^2_{L^6(\Omega)}||\nabla u_2||^2_{L^2(\Omega)}\Big)\\
      + ||v_1||^2_{L^4(\Omega)}+ ||\nabla w_1||^2_{L^2(\Omega)}\Big\}.
  \end{align*}
  
  Choose $\mu > \lambda$, then Gronwall's inequality yields
  \begin{align*}
    ||v(t)||^2_{L^2(\Omega)}+  ||\nabla u(t)||^2_{L^2(\Omega)}+ \int_{0}^{T}\Big(2(\mu-\lambda) ||\nabla v||^2_{L^2(\Omega)}+ \lambda||\nabla \Delta u||^2_{L^2(\Omega)}\Big) \,dt \\ \leq \Big(\int_{0}^{T}|| \theta||^2_{L^2(\Omega)}\,dt]\Big) exp\Big(\int_{0}^{T} C_4(t)\,dt\Big),
  \end{align*}
  where 
  \begin{align*}
      \int_{0}^{T}C_4(t)dt =& C\int_{0}^{T}\Big\{\Big(1+||u_2||^2_{L^4(\Omega)}\Big)\Big(||f'(u_1)||^4_{L^2(\Omega)}+ ||u_1 +u_2||^2_{L^6(\Omega)}||\nabla u_2||^2_{L^2(\Omega)}\Big)\\
      & \hspace{5.5cm}+ ||v_1||^2_{L^4(\Omega)}+ ||\nabla w_1||^2_{L^2(\Omega)}\Big\}dt.\\
      \leq & C(||v_0||_{L^2(\Omega)^n},||u_0||_{H^1(\Omega)},K, T).
  \end{align*}
   Hence,
   \begin{align*}
   &\nabla w= -\nabla \Delta u+\nabla ( f(u_1)-f(u_2))\\
      \Rightarrow& \int_{0}^{T} || \nabla w||^2_{L^2(\Omega)}\,dt\leq 2\int_{0}^{T}\Big(|| \nabla \Delta u||^2_{L^2(\Omega)}+ ||  \nabla ( f(u_1)-f(u_2))||^2_{L^2(\Omega)}\Big)\,dt \\
      &\hspace{3cm} \leq C_5\int_{0}^{T}|| \theta||^2_{L^2(\Omega)}\,dt.
   \end{align*}
     Furthermore, 
    \begin{align*}
        \int_{0}^{T} || \partial_t u||^2_{H^1(\Omega)^*}\, dt \leq &\,2\int_{0}^{T}\Big( || v_1||^2_{L^6(\Omega)}|| \nabla u||^2_{L^2(\Omega)}+ ||v||^2_{L^6(\Omega)}||\nabla u_2||^2_{L^2(\Omega)}\\
       &\hspace{4cm}+\alpha||u||^2_{L^2(\Omega)} +|| \nabla w||^2_{L^2(\Omega)}\Big)\,dt \\
        &\leq\, C_6 \int_{0}^{T}|| \theta||^2_{L^2(\Omega)}\,dt
    \end{align*}
    and
    \begin{align*}
        \int_{0}^{T} || \partial_t v||^2_{\mathcal{V}^*}\, dt \leq &\,2\int_{0}^{T}\Big(  \mu||\nabla v||^2_{L^2(\Omega)}+  \lambda ||u_1||^2_{L^6(\Omega)} ||\nabla w||^2_{L^2(\Omega)} \\
    &\,\hspace{1cm}+||u||^2_{L^6(\Omega)}|| \nabla w_2||^2_{L^2(\Omega)}+ ||\theta||^2_{L^2(\Omega)}\Big)\, dt \\
     \leq &\, C_7 \int_{0}^{T}|| \theta||^2_{L^2(\Omega)}\,dt,
    \end{align*}
    where the constants $C_5,C_6,C_7$ are depending on $C_4$. 
    \end{proof}
Putting these results together, one can simply conclude that control to state operator i.e., $\mathcal{S}: \theta \to (v,u,w)$ is a Lipschitz continuous from $L^2(0,T;\mathcal{H})$ to
\begin{align*}
    \mathcal{F}= L^2(0,T; \mathcal{V}) \cap H^1(0,T; \mathcal{V}^*) \times H^1(0,T;H^1(\Omega)^*) \cap C(0,T; H^1(\Omega)) \cap L^2(0,T; H^3(\Omega)) \\\times L^2(0,T;H^1(\Omega)).
\end{align*}
Hence the control to state mapping $\mathcal{S}: \theta \to (v,u,w)$ is well-defined from $L^2(0,T; \mathcal{H})$ to $\mathcal{F}$. Moreover, $\mathcal{S}$ is a Lipschitz continuous mapping from $\mathcal{U}$ of $L^2(0,T;\mathcal{H})$ to $\mathcal{F}.$


 \section{Optimality condition}
\begin{lemma}
   Assume that assumptions (A1) -- (A4) hold, then $\mathcal{(OP)}$ admits a solution.
\end{lemma}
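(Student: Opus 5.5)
We use the direct method of the calculus of variations. Since $J\ge 0$, the number $j:=\inf_{\theta\in\mathcal{U}_{ad}}J(\theta)$ is finite. Moreover $\mathcal{U}_{ad}$ is nonempty: because $\theta_{min}\le\theta_{max}$ a.e., any measurable selection between them (for instance $\theta_{min}$ or $\theta_{max}$ itself) is admissible. Take a minimizing sequence $\{\theta_k\}\subset\mathcal{U}_{ad}$ with $J(\theta_k)\to j$. The set $\mathcal{U}_{ad}$ is bounded in $L^2(0,T;\mathcal{H})$, because $\theta_{min},\theta_{max}\in L^\infty$ (and also by (A3), since $\mathcal{U}_{ad}\subset\mathcal{U}$). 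It is also convex and closed, since pointwise box constraints are preserved under strong $L^2$ limits by passing to an a.e.\ convergent subsequence. Hence it is weakly closed, and we may extract a subsequence with $\theta_k \overset{w}{\rightharpoonup} \bar\theta$ in $L^2(0,T;\mathcal{H})$ and $\bar\theta\in\mathcal{U}_{ad}$.

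Let $(v_k,u_k,w_k)=\mathcal{S}(\theta_k)$. Theorem \ref{Theorem 3.1} gives bounds in the spaces of Theorem \ref{thm 3.2} that are uniform in $k$, because the constant there depends only on the initial data, $T$, and $\|\theta_k\|_{L^2}\le K$. Specifically:
\begin{itemize}
\item $v_k$ is bounded in $L^2(0,T;\mathcal{V})\cap H^1(0,T;\mathcal{V}^*)$;
\item $u_k$ is bounded in $L^\infty(0,T;H^1(\Omega))\cap L^2(0,T;H^3(\Omega))\cap H^1(0,T;H^1(\Omega)^*)$;
\item $w_k$ is bounded in $L^2(0,T;H^1(\Omega))$.
\end{itemize}
This yields weak and weak$^*$ limits $(\bar v,\bar u,\bar w)$. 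The Aubin--Lions lemma then upgrades these to strong convergence $v_k\to\bar v$ in $L^2(0,T;\mathcal{H})$ and $u_k\to\bar u$ in $L^2(0,T;H^2(\Omega))$, as well as in $C([0,T];L^2(\Omega))$.

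The key step is passing to the limit in the weak formulation \eqref{2.8}--\eqref{2.10} to show $(\bar v,\bar u,\bar w)=\mathcal{S}(\bar\theta)$. The linear terms and the control term $\int\theta_k\cdot\psi$ pass by weak convergence. The nonlinearities are handled as follows.
\begin{itemize}
\item In $\int u_k\nabla w_k\cdot\psi$, the factor $u_k$ converges strongly in $L^2(0,T;L^\infty)$, using $H^2\hookrightarrow L^\infty$ for $n\le 3$, while $\nabla w_k$ converges weakly in $L^2$; the product therefore converges for smooth $\psi$, and then for all $\psi$ by density.
\item In $\int u_kv_k\cdot\nabla\phi$, we use strong convergence of both factors in $L^2$, with the $L^\infty$ bound on $u_k$ in space.
\item In $f(u_k)=u_k^3-u_k$, the strong convergence of $u_k$ in $L^2(0,T;L^\infty)$ together with the uniform $L^\infty(0,T;L^4)$ bound gives $f(u_k)\to f(\bar u)$ in $L^1$, which suffices against test functions.
\end{itemize}
The initial conditions are preserved by the $C([0,T];\cdot)$ convergence. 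This identification, especially of the surface-tension term $u_k\nabla w_k$, is the main obstacle; the compactness is what makes it possible. If uniqueness is needed to conclude that $\mathcal{S}(\bar\theta)$ is well defined, Theorem \ref{thm 3.3} provides it.

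Finally, $J$ is weakly lower semicontinuous. The tracking terms converge, or at least are lower semicontinuous by weak $L^2$ convergence of $v_k$ and $u_k$, and $\frac{\beta}{2}\|\theta\|^2_{L^2}$ is convex and continuous, hence weakly lower semicontinuous. Therefore $J(\bar\theta)\le\liminf_k J(\theta_k)=j$, and $\bar\theta$ is an optimal control with optimal state $\mathcal{S}(\bar\theta)$.
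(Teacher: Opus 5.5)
Your proof is correct and follows essentially the same direct-method route as the paper: a minimizing sequence, uniform bounds from Theorem~\ref{Theorem 3.1}, Aubin--Lions compactness, identification of the limit state, and weak lower semicontinuity of $J$. In places it is more careful than the paper, for example in justifying the weak closedness of $\mathcal{U}_{ad}$ and in claiming only $C([0,T];L^2(\Omega))$ convergence of $u_k$, which is what the available bounds actually give.

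The one slip is the nonemptiness remark. In general $\theta_{min}$ and $\theta_{max}$ are not admissible, because elements of $\mathcal{U}_{ad}$ must be divergence-free fields in $L^2(0,T;\mathcal{H})$. Nonemptiness therefore has to be assumed, as the paper tacitly does, or deduced from a condition such as $\theta_{min}\le 0\le\theta_{max}$, which gives $0\in\mathcal{U}_{ad}$.
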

\begin{proof}
    Let $m= \underset{\theta \in \mathcal{U}_{ad}}{inf} \,J(\theta)$. Since $0 \leq m < \infty$, then, there exists a sequence $\{\theta_n\}_n \in \mathcal{U}_{ad} \subset L^2(0,T;\mathcal{H})$ s.t. $\displaystyle\lim_{n \to \infty}\, J(\theta_n) =m$. Further assume that $\mathcal{S}(\theta_n)=(v_n,u_n,w_n)$ for $n \in \mathbb{N}$. Hence, from Theorem \ref{Theorem 3.1}, there exists subsequences of $\{\theta_n\}_n,\, \{v_n\}_n,\, \{\partial_t v_n\}_n,\, \{u_n\}_n,\, \{\partial_t u_n\}_n,\, \{w_n\}_n$ still indexed by $n$ s.t.

    \begin{itemize}
        \item[(i)] $\theta_n \overset{w}{\rightharpoonup} \theta$ in $\mathcal{U}_{ad} \subset$  \,$L^2(0,T;\mathcal{H})$,
        \item[(ii)] $v_n \overset{w}{\rightharpoonup} v$ in $L^2(0,T; \mathcal{V})$,
        \item[(iii)] $\partial_t v_n \overset{w}{\rightharpoonup} \partial_t v$ in $L^2(0,T; \mathcal{V}^*)$,
        \item[(iv)] $u_n \overset{w}{\rightharpoonup} u$ in $L^2(0,T; H^3(\Omega))$, 
        \item[(v)] $u_n \overset{w^*}{\rightharpoonup} u$ in $L^\infty(0,T; H^1(\Omega))$,
        \item[(vi)] $\partial_t u_n \overset{w}{\rightharpoonup} \partial_t u$ in $L^2(0,T;H^1(\Omega)^*)$,
        \item[(vii)] $w_n \overset{w}{\rightharpoonup} \hat{w}$ in $L^2(0,T;H^1(\Omega))$.
    \end{itemize}

Again, applying Lion-Aubin's compactness theorem yields
\begin{equation*}
    u_n \to u \text{  strongly in } L^2(0,T;H^2(\Omega)) \cap C([0,T];H^1(\Omega)).
\end{equation*}
This implies that
\begin{align*}
    \hat{w}= -\Delta u+f(u)= w
\end{align*}
and 

\begin{align*}
   & \int_{(0,T)\times  \Omega} (v_n u_n) \cdot \nabla \phi_1 \,\di(x,t)\to  \int_{(0,T)\times  \Omega} (vu)\cdot \nabla\phi_1\, \di(x,t)\,\, \text{as} \,\,  n \to \infty,\\
    & \int_{(0,T)\times  \Omega}\phi_2 \cdot (\nabla u_n w_n) \,\di(x,t) \to \int_{(0,T)\times  \Omega}\phi_2 \cdot (\nabla u w) \,\di(x,t) \,\, \text{as} \,\,  n \to \infty
\end{align*}
for $\phi_1 \in \mathcal{V}$ and $\phi_2 \in H^1(\Omega)$. Hence, $\mathcal{S}(\theta)=(v,u,w) \in \mathcal{F}$. Next, the weakly lower semicontinuity of $J$ implies that $\theta$ is an optimal control for $\mathcal{(OP)}$.
\end{proof}
  Throughout this article, we frequently use $\hat{\theta}$ as a local optimal control along with the associate state $(\hat{v},\hat{u}, \hat{w})= \mathcal{S}(\hat{\theta}) \in \mathcal{F}$. 
    
  \subsection{Study of the linearized system}
  Let $h$ be a fixed control. Now to get the Fr\,echet differentiability of the control to state operator $\mathcal{S}$, we want to find the linearized form of the state equations $\mathcal{(P)}$ around $(\hat{ v},\hat{u}, \hat{w})$. Taking $u=\hat{u}+\varphi_2,\, w=\hat{w}+\varphi_3,\,  v=\hat{ v}+\varphi_1, \, p=\hat{p}+ \bar{p}$ and substituting this we obtain the following linearized system  

    \begin{subequations}
        \begin{align}
            &\frac{\partial \varphi_1}{\partial t}- \mu \Delta \varphi_1+ \nabla \bar{p
            }=- \lambda \varphi_2 \nabla \hat{w}- \lambda\hat{u} \nabla \varphi_3 + h&&\text{in}\quad (0,T)\times  \Omega,\label{lin1}\\ 
            &\frac{\partial \varphi_2}{\partial t}- \Delta \varphi_3 + \varphi_1 \cdot \nabla \hat{u}+\hat{ v} \cdot \nabla \varphi_2+ \alpha \varphi_2=0 &&\text{in}\quad (0,T)\times  \Omega,\label{lin2}\\
            &\varphi_3= -\Delta \varphi_2+ f'(\hat{u})\varphi_2 && \text{in}\quad (0,T)\times  \Omega,\label{lin3}\\
            &\nabla \cdot \varphi_1=0&& \text{in}\quad (0,T)\times  \Omega,\label{lin4}\\
            &\varphi_1(x,t)=0,\, \nabla \varphi_2 \cdot \vec{n}=0,\,\nabla \varphi_3 \cdot \vec{n}=0&& \text{on}\quad (0,T)\times  \partial\Omega,\label{lin5}\\
            &\varphi_1(x,0)=0,\, \varphi_2(x,0)=0&& \text{in}\quad\Omega.\label{lin6}
        \end{align}
    \end{subequations}
\begin{thm}{\label{thm 4.2}}
    For $h \in L^2(0,T;\mathcal{H})$, the linearized problem \eqref{lin1} -- \eqref{lin6} admits a unique weak solution $(\varphi_1,\varphi_2,\varphi_3)$ such that the following estimate holds 
    \begin{align*}
     || \varphi_2(t)||^2_{H^1(\Omega)}+ \int_{0}^{t} \Big(||\varphi_2(t)||^2_{H^3(\Omega)}+ || \varphi_3(t)||^2_{H^1(\Omega)}+ ||\varphi_1(t)||^2_{H^1(\Omega)}\Big)\,dt\\
     \leq C \int_{0}^{t}||h(t)||^2_{L^2(\Omega)}\,dt
    \end{align*}
    for a.e. $t \in (0,T)$.
\end{thm}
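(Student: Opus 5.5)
We will prove Theorem \ref{thm 4.2} by a Faedo--Galerkin scheme, as in Theorem \ref{thm 3.2}, combined with energy estimates modeled on Theorem \ref{thm 3.3}. The linearized system \eqref{lin1}--\eqref{lin6} is the differentiated form of the difference system \eqref{2.29}--\eqref{2.37}, with $f(u_1)-f(u_2)$ replaced by $f'(\hat u)\varphi_2$ and with the coefficients $\hat v,\hat u,\hat w$ fixed.

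\textbf{Galerkin setup.} We seek $\varphi_{1,N}\in Y_N$ and $\varphi_{2,N},\varphi_{3,N}\in X_N$ solving the projected equations. Because the system is linear in the unknowns, with coefficients whose regularity is given by Theorem \ref{Theorem 3.1} and Theorem \ref{thm 3.2}, it reduces to a linear ODE system with integrable coefficients. This system has a unique global solution on $[0,T]$, so no blow-up argument is needed.

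\textbf{A priori estimate.} We test \eqref{lin1} with $\varphi_1$ and \eqref{lin2} with $-\lambda\Delta\varphi_2$, and substitute \eqref{lin3} exactly as in the proof of Theorem \ref{thm 3.3}. This gives
\begin{align*}
\frac12\frac{d}{dt}\Big(\|\varphi_1\|^2_{L^2}+\lambda\|\nabla\varphi_2\|^2_{L^2}\Big)+\mu\|\nabla\varphi_1\|^2_{L^2}+\lambda\|\nabla\Delta\varphi_2\|^2_{L^2}
=\lambda\int_\Omega\nabla(f'(\hat u)\varphi_2)\cdot\nabla\Delta\varphi_2\,dx+R,
\end{align*}
where $R$ collects four groups of terms:
\begin{itemize}
\item the coupling terms $-\lambda\int\varphi_2\nabla\hat w\cdot\varphi_1$ and $-\lambda\int\hat u\nabla\varphi_3\cdot\varphi_1$;
\item the advection terms $\lambda\int(\varphi_1\cdot\nabla\hat u+\hat v\cdot\nabla\varphi_2)\Delta\varphi_2$;
\item the Oono term $\lambda\alpha\int\varphi_2\Delta\varphi_2=-\lambda\alpha\|\nabla\varphi_2\|^2\le0$;
\item the forcing term $\int h\cdot\varphi_1$.
\end{itemize}

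\textbf{Bounding the terms.} We use Hölder, Gagliardo--Nirenberg (e.g. $\|\nabla\varphi_2\|_{L^6}\le C\|\varphi_2\|_{H^1}^{1/2}\|\varphi_2\|_{H^3}^{1/2}$) and Young. The term $\nabla(f'(\hat u)\varphi_2)=f'(\hat u)\nabla\varphi_2+6\hat u\varphi_2\nabla\hat u$ is controlled using $\hat u\in L^\infty(H^1)\cap L^2(H^3)\subset L^\infty(L^6)$. The term $\hat u\nabla\varphi_3\cdot\varphi_1$ is the most delicate, and we handle it in the same way as the paper: write $\nabla\varphi_3=-\nabla\Delta\varphi_2+\nabla(f'(\hat u)\varphi_2)$ and estimate it by
\[
\|\hat u\|_{L^\infty}\|\nabla\Delta\varphi_2\|_{L^2}\|\varphi_1\|_{L^2}.
\]
Here $\|\hat u\|_{L^\infty}\in L^2(0,T)$ because $\hat u\in L^2(H^3)$. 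After absorbing $\tfrac{\lambda}{2}\|\nabla\Delta\varphi_2\|^2$, the resulting factor $\|\hat u\|^2_{L^\infty}$ is integrable in time and can go into Gronwall. The terms containing $\varphi_1$ in $L^4$ and $L^6$ are absorbed into $\mu\|\nabla\varphi_1\|^2$. We also need $\|\varphi_2\|_{L^2}$; it is obtained by testing \eqref{lin2} with $\varphi_2$, which yields $\|\varphi_2\|_{L^2}$ control modulo the same lower-order terms. Gronwall's inequality then gives uniform bounds on $\varphi_{1,N}$ in $L^\infty(L^2)\cap L^2(\mathcal V)$ and on $\varphi_{2,N}$ in $L^\infty(H^1)\cap L^2(H^3)$, with right-hand side $C\int_0^t\|h\|^2_{L^2}\,dt$.

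\textbf{Remaining bounds, passage to the limit, uniqueness.} From \eqref{lin3} we get the $L^2(H^1)$ bound on $\varphi_{3,N}$, and duality arguments give bounds on the time derivatives. The main obstacle is ensuring that all coefficient factors in Gronwall are in $L^1(0,T)$ using only the regularity of Theorem \ref{Theorem 3.1}. In particular this requires $\hat v\in L^2(L^6)$ and $\|\nabla\hat w\|_{L^2}\in L^2(0,T)$ paired with $\|\varphi_2\|_{L^\infty}\lesssim\|\varphi_2\|_{H^2}$. Since the problem is linear, weak limits pass directly in the equations. Uniqueness follows by applying the same estimate with $h=0$.
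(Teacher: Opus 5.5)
Your argument is correct, but its core energy estimate differs from the paper's.

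The paper tests \eqref{lin1}, \eqref{lin2}, \eqref{lin3} with $\varphi_1$, $\lambda\varphi_3$, $\lambda\partial_t\varphi_2$, mimicking the free-energy identity of $\mathcal{(P)}$. The coupling terms $-\lambda\int_\Omega \hat u\nabla\varphi_3\cdot\varphi_1\,dx$ and $-\lambda\int_\Omega (\varphi_1\cdot\nabla\hat u)\varphi_3\,dx$ then sum to $-\lambda\int_\Omega\nabla(\hat u\varphi_3)\cdot\varphi_1\,dx=0$, since $\nabla\cdot\varphi_1=0$. So the term you single out as most delicate never has to be estimated there. The price is the term $-\lambda\int_\Omega f'(\hat u)\varphi_2\,\partial_t\varphi_2\,dx$. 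The paper rewrites it as $-\frac{\lambda}{2}\frac{d}{dt}\int_\Omega f'(\hat u)|\varphi_2|^2\,dx+3\lambda\int_\Omega \hat u\,\partial_t\hat u\,|\varphi_2|^2\,dx$, which uses $\partial_t\hat u\in L^2(0,T;H^1(\Omega)^*)$. It then needs a second test with $4\lambda\varphi_2$ and $-\lambda\Delta\varphi_2$ to obtain $\|\varphi_2\|_{L^2}$ and $\|\Delta\varphi_2\|_{L^2}$. Finally, $\nabla\Delta\varphi_2$ is recovered only afterwards, from $\nabla\varphi_3$ through \eqref{lin3}.

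Your single test with $-\lambda\Delta\varphi_2$, as in Theorem \ref{thm 3.3}, gives the $\|\nabla\Delta\varphi_2\|_{L^2}^2$ dissipation directly and never differentiates the coefficients in time. In exchange, it spends the $L^2(0,T;H^3(\Omega))$ regularity of $\hat u$ on the coupling term. This closes, but you should track time integrability in the cubic pieces of $\hat u\nabla(f'(\hat u)\varphi_2)\cdot\varphi_1$. Knowing $\|\hat u\|_{L^\infty}\in L^2(0,T)$ alone does not make $\|\hat u\|_{L^\infty}^3$ integrable. The interpolation $\|\hat u\|_{L^\infty}\lesssim\|\hat u\|_{H^1}^{3/4}\|\hat u\|_{H^3}^{1/4}\in L^8(0,T)$ does.

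Because you absorb the $\varphi_1$ terms with Young's $\varepsilon$, your route also avoids the restriction $\mu>\lambda$ that the paper imposes before applying Gronwall. That restriction does not appear in the statement, so your argument proves the theorem as stated.
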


    \begin{proof}
We begin with the existence of weak solution for the linearized problem \eqref{lin1} -- \eqref{lin6}. We are going to use the same procedure as Theorem \ref{thm 3.2}.
Let $\{\eta_i\}_{i=1}^\infty$ be a family of eigenfunctions  to the Stokes operator $A$ with the corresponding non-decreasing sequence of eigenvalues $\{\kappa_i\}_{i=1}^\infty$ such that $A\eta_i=\kappa_i \eta_i$.  $\{\kappa_i\}_{i=1}^\infty$ are complete orthonormal basis of $\mathcal{H}$ and orthogonal to $\mathcal{V}$. Further assume that  $\{\xi_i\}$ be the orthonormal base in $L^2(\Omega)$ being composed of the eigenfunctions of the operator $-\Delta$ with the Neumann boundary condition. 
\par Next, take $X_N=$ span $\{\xi_1, \xi_2,\cdots, \xi_N\}$, $Y_N=$ span $\{\eta_1, \eta_2, \cdots, \eta_N\}$. Let $P_{X_N}$ and $P_{Y_N}$ be the orthogonal projectors from $L^2(\Omega)$ to $X_N$ and $\mathcal{H}$ to $Y_N$, respectively. Then we look for functions 
\begin{align*}
    (\varphi_2)_N=\displaystyle \sum_{i=1}^N a_{N_i}(t) \xi_i \in X_N,\, (\varphi_3)_N=\displaystyle \sum_{i=1}^N b_{N_i}(t) \xi_i \in X_N\, \text{and} \,(\varphi_1)_N=\displaystyle \sum _{i=1}^N c_{N_i}(t)\eta_i \in Y_N
\end{align*}
solve the following approximate problem
\begin{align*}
    &  \langle\partial_t  (\varphi_1)_N, \eta \rangle +  \mu \int_{\Omega} \nabla  (\varphi_1)_N  : \nabla \eta \, dx =-\lambda \int_{ \Omega} (\varphi_2)_N \nabla \hat{w} \cdot \eta\,dx- \lambda \int_{ \Omega}\hat{u}\nabla (\varphi_3)_N\cdot \eta \,dx\\
    &\hspace{10cm} +  \int_{\Omega} h \cdot \eta_i\,dx,\\
     &  \langle\partial_t (\varphi_2)_N, \xi \rangle \,+ \int_{\Omega} \nabla (\varphi_3)_N \cdot \nabla \xi\, dx + \alpha \int_{ \Omega} (\varphi_2)_N \xi \,dx + \int_{\Omega} (\varphi_1)_N \cdot (\nabla \hat{u}) \xi\,dx\\
    &\hspace{8cm}+ \int_{\Omega} \hat{v}\cdot \nabla (\varphi_2)_N\xi\,dx   = 0,\\
     &\int_{\Omega}(\varphi_3)_N \zeta \,dx =\int_{ \Omega}\nabla (\varphi_2)_N \cdot \nabla \zeta \, dx+ \int_{\Omega} f'(\hat{u})(\varphi_2)_N\zeta\, dx
\end{align*}
for all $\eta \in Y_N$ and $\xi, \zeta \in X_N$ with $a_{N_i}(0)=\langle \varphi_2(x,0), \xi_i \rangle$ and $c_{N_i}(0)=\langle \varphi_1(x,0), \eta_i \rangle$; $i=1,2 \cdots N$. Note that $(\varphi_2)_N(0,x)=0$ and $(\varphi_1)_N(0,x)=0$.

Choose $\eta =(\varphi_1)_N,\, \xi=\lambda (\varphi_3)_N$ and $\zeta=\lambda \frac{\partial (\varphi_2)_N}{\partial t}$, then we get
\begin{align}
    &\frac{1}{2}\frac{d }{dt}||(\varphi_1)_N||^2_{L^2(\Omega)} + \mu || \nabla (\varphi_1)_N||^2_{L^2(\Omega)}+ \frac{\lambda}{2} \frac{d }{dt} || \nabla (\varphi_2)_N||^2_{L^2(\Omega)}+ \lambda || \nabla (\varphi_3)_N||^2_{L^2(\Omega)}\nonumber\\
   = &\, -\lambda \int_{\Omega} (\varphi_2)_N \nabla \hat{w}\cdot (\varphi_1)_N\,dx+ \int_{\Omega} h \cdot (\varphi_1)_N \,dx +  \lambda  \int_{\Omega} \hat{v} \cdot \nabla (\varphi_3)_N  (\varphi_2)_N \,dx\nonumber\\
    &\hspace{4cm}- \lambda \int_{\Omega} f'(\hat{u})(\varphi_2)_N \frac{\partial (\varphi_2)_N}{\partial t}\,dx -\lambda \alpha \int_{\Omega} (\varphi_2)_N (\varphi_3)_N\, dx\nonumber\\
   \leq &\,\,  \lambda || (\varphi_2)_N||_{L^6(\Omega)} || \nabla \hat{w}||_{L^2(\Omega)}|| (\varphi_1)_N||_{L^3(\Omega)} + \lambda || \hat{v}||_{L^6(\Omega)}|| \nabla (\varphi_3)_N||_{L^2(\Omega)}|| (\varphi_2)_N||_{L^3(\Omega)}\nonumber\\
   &+ ||h||_{L^2(\Omega)} || (\varphi_1)_N||_{L^2(\Omega}- \lambda \int_{\Omega} f'(\hat{u})(\varphi_2)_N \frac{\partial (\varphi_2)_N}{\partial t}\,dx -\lambda \alpha\int_{\Omega} (\varphi_2)_N (\varphi_3)_N\, dx.\label{lin7}
\end{align}
Now,
\begin{align*}
    \int_{\Omega} (\varphi_2)_N (\varphi_3)_N\, dx= &\int_{\Omega} (\varphi_2)_N (-\Delta (\varphi_2)_N+ f'(\hat{u})(\varphi_2)_N)\,dx\\
    \leq & \,\,||\nabla (\varphi_2)_N||^2_{L^2(\Omega)}+ \frac{1}{3} || (\varphi_2)_N||^2_{L^4(\Omega)}||\hat{u}||^2_{L^4(\Omega)}+ || (\varphi_2)_N||^2_{L^2(\Omega)}.
\end{align*}
Thanks to
\begin{align*}
    \int_{\Omega} \Big(\frac{\partial (\varphi_2)_N}{\partial t}+\alpha (\varphi_2)_N\Big) \,dx=0
\end{align*}
with $(\varphi_2)_N(0,x)=0$ implies that
\begin{equation*}
    \int_{\Omega} (\varphi_2)_N(x,t)\,dx = 0.
\end{equation*}
Hence, 
\begin{align*}
    \int_{\Omega} (\varphi_2)_N (\varphi_3)_N\, dx \leq C || \nabla (\varphi_2)_N||^2_{L^2(\Omega)}.
\end{align*}
Again,
\begin{align*}
    &-\int_{\Omega} f'(\hat{u})(\varphi_2)_N \frac{\partial (\varphi_2)_N}{\partial t}\,dx= -\frac{1}{2} \frac{d}{dt}\int_{\Omega}f'(\hat{u})| (\varphi_2)_N|^2 \,dx+ 3 \int_{\Omega} \hat{u} \partial_t \hat{u} |(\varphi_2)_N|^2\,dx
    \end{align*}
    Hence,
    \begin{align*}
    &\, -\int_{0}^{t} \int_{\Omega} f'(\hat{u})(\varphi_2)_N \frac{\partial (\varphi_2)_N}{\partial t}\,\di(x,t)\\
    = &-\frac{1}{2} \int_{\Omega}f'(\hat{u}(t))| (\varphi_2(t))_N|^2 \,dx + 3\int_{0}^{t}  \int_{\Omega} \hat{u} \partial_t \hat{u} |(\varphi_2)_N|^2\,\di(x,t)\\
   \leq & \,\,\frac{1}{2} ||f'(\hat{u}(t))||_{L^2(\Omega)} || (\varphi_2)_N||^2_{L^4(\Omega)}+ 3\int_{0}^{t}  \int_{\Omega} \hat{u} \partial_t \hat{u} |(\varphi_2)_N|^2\,\di(x,t)
    \end{align*}
    and 
    \begin{align*}
        &\int_{0}^{t}  \int_{\Omega} \hat{u} \partial_t \hat{u} |(\varphi_2)_N|^2\,\di(x,t)\\
        \leq&\, \int_{0}^{t} ||\partial_t \hat{u}||_{H^1(\Omega)^*} || \hat{u} |(\varphi_2)_N|^2||_{H^1(\Omega)}\,dt\\
        \leq&\,\int_{0}^{t} ||\partial_t \hat{u}||_{H^1(\Omega)^*} \Big ( ||\hat{u} |(\varphi_2)_N|^2||_{L^2(\Omega)}+|| \nabla (\hat{u} |(\varphi_2)_N|^2)||_{L^2(\Omega)}\Big)\,dt\\
        \leq &\, \int_{0}^{t} ||\partial_t \hat{u}||_{H^1(\Omega)^*} \Big ( ||\hat{u}||_{L^6(\Omega)}|| (\varphi_2)_N||^2_{L^6(\Omega)}+ || \nabla \hat{u}||_{L^6(\Omega)}||(\varphi_2)_N||^2_{L^6(\Omega)}\\
        & \hspace{3cm} +||\hat{u}||_{L^6(\Omega)}|| (\varphi_2)_N||_{L^6(\Omega)}|| \nabla (\varphi_2)_N||_{L^6(\Omega)}\Big)\,dt.
    \end{align*}

Integrating \eqref{lin7} w.r.t. $t$ yields
\begin{align}
    & \frac{1}{2}||(\varphi_1)_N(t)||^2_{L^2(\Omega)} +\frac{\lambda}{2}  || \nabla (\varphi_2)_N(t)||^2_{L^2(\Omega)}+\int_{0}^{t} \Big( \mu || \nabla (\varphi_1)_N||^2_{L^2(\Omega)}+ \lambda || \nabla (\varphi_3)_N||^2_{L^2(\Omega)}\Big)\,dt\nonumber\\
     \leq &\,\,  \lambda\int_{0}^{t} \Big(|| (\varphi_2)_N||_{L^6(\Omega)} || \nabla \hat{w}||_{L^2(\Omega)}|| (\varphi_1)_N||_{L^3(\Omega)} +  || \hat{v}||_{L^6(\Omega)}|| \nabla (\varphi_3)_N||_{L^2(\Omega)}|| (\varphi_2)_N||_{L^3(\Omega)}\Big)\,dt\nonumber\\
   & + \int_{0}^{t}||h||_{L^2(\Omega)} || (\varphi_1)_N||_{L^2(\Omega}\,dt- \lambda \int_{0}^{t}\int_{\Omega} f'(\hat{u})(\varphi_2)_N \frac{\partial (\varphi_2)_N}{\partial t}\,\di(x,t) -\lambda \alpha\int_{0}^{t} \int_{\Omega} (\varphi_2)_N (\varphi_3)_N\, \di(x,t)\nonumber\\
    \leq &\,\, \lambda\int_{0}^{t} \Big(|| (\varphi_2)_N||_{L^6(\Omega)} || \nabla \hat{w}||_{L^2(\Omega)}|| (\varphi_1)_N||_{L^3(\Omega)} +  || \hat{v}||_{L^6(\Omega)}|| \nabla (\varphi_3)_N||_{L^2(\Omega)}|| (\varphi_2)_N||_{L^3(\Omega)}\Big)\,dt\nonumber\\
   & + \int_{0}^{t} ||h||_{L^2(\Omega)} || (\varphi_1)_N||_{L^2(\Omega}\,dt+ \lambda C\int_{0}^{t} || \nabla (\varphi_2)_N||^2_{L^2(\Omega)}\,dt -\frac{\lambda}{2} \int_{\Omega}f'(\hat{u}(t))| (\varphi_2(t))_N|^2 \,dx \nonumber\\
   &+\lambda \int_{0}^{t} ||\partial_t \hat{u}||_{H^1(\Omega)^*} \Big ( ||\hat{u}||_{L^6(\Omega)}|| (\varphi_2)_N||^2_{L^6(\Omega)}+ || \nabla \hat{u}||_{L^6(\Omega)}||(\varphi_2)_N||^2_{L^6(\Omega)}\Big)\,dt\nonumber\\
   &\hspace{2.6cm}+ \lambda\int_{0}^{t} ||\partial_t \hat{u}||_{H^1(\Omega)^*} ||\hat{u}||_{L^6(\Omega)}|| (\varphi_2)_N||_{L^6(\Omega)}|| \nabla (\varphi_2)_N||_{L^6(\Omega)}\,dt. \label{lin8}
\end{align}
Again, choose $\xi= 4\lambda (\varphi_2)_N$ and $\zeta=-\lambda \Delta (\varphi_2)_N $ to get 
\begin{align}
    &\frac{4\lambda}{2}\frac{d}{dt}|| (\varphi_2)_N||^2_{L^2(\Omega}+\lambda  || \Delta (\varphi_2)_N||^2_{L^2(\Omega)}\nonumber\\
    \leq&  \,\,\lambda||f'(\hat{u})||_{L^3(\Omega)} || (\varphi_2)_N||_{L^6(\Omega)}|| \Delta (\varphi_2)_N||_{L^2(\Omega)}\nonumber\\
    &\hspace{2cm} + 4 \lambda || (\varphi_2)_N||^2_{L^2(\Omega)}+  4\lambda ||(\varphi_1)_N||_{L^2(\Omega)}||\nabla \hat{u}||_{L^6(\Omega)}||(\varphi_2)_N||_{L^3(\Omega)} \nonumber\\
   \Rightarrow & \, 2\lambda|| (\varphi_2)_N(t)||^2_{L^2(\Omega)}+\lambda\int_{0}^{t} || \Delta (\varphi_2)_N||^2_{L^2(\Omega)}\,dt \nonumber\\
   \leq& \,\, \lambda \int_{0}^{t}\Big(   ||f'(\hat{u})||_{L^3(\Omega)} || (\varphi_2)_N||_{L^6(\Omega)}|| \Delta (\varphi_2)_N||_{L^2(\Omega)}\nonumber\\
    &\hspace{2cm}  +4|| (\varphi_2)_N||^2_{L^2(\Omega)}+ 4 ||(\varphi_1)_N||_{L^2(\Omega)}||\nabla \hat{u}||_{L^6(\Omega)}||(\varphi_2)_N||_{L^3(\Omega)}\Big) \,dt.\label{lin9}
\end{align}
Adding \eqref{lin8} and \eqref{lin9}
\begin{align*}
      \Big(||(\varphi_1)_N(t)||^2_{L^2(\Omega)} +  \lambda || \nabla (\varphi_2)_N(t)||^2_{L^2(\Omega)}+\lambda || (\varphi_2)_N(t)||^2_{L^2(\Omega)}\Big)\\+2(\mu-\lambda)\int_{0}^{t}  || \nabla (\varphi_1)_N||^2_{L^2(\Omega)}\,dt
     + \int_{0}^{t}\Big(\lambda || \nabla (\varphi_3)_N||^2_{L^2(\Omega)}+||\Delta (\varphi_2)_N||^2_{L^2(\Omega)}\Big)\,dt\\
     \leq  \int_{0}^{t}\mathcal{C}(t)\Big(||(\varphi_1)_N(t)||^2_{L^2(\Omega)} + \lambda || \nabla (\varphi_2)_N(t)||^2_{L^2(\Omega)}+\lambda|| (\varphi_2)_N(t)||^2_{L^2(\Omega)}\Big)dt \\
     + C \int_{0}^{t}  || h(t)||^2_{L^2(\Omega)}\,dt,
\end{align*}
where 
\begin{align*}
    \mathcal{C}= C(1+ || \nabla \hat{w}||^2_{L^2(\Omega)}+ ||\hat{v}||^2_{L^6(\Omega)}+||f'(\hat{u})||^2_{L^3(\Omega)}+ || \partial_t \hat{u}||^2_{H^1(\Omega)^*}||\hat{u}||^2_{L^6(\Omega)}\\+||\nabla \hat{u}||_{L^6(\Omega)}
    +  || \partial_t \hat{u}||_{H^1(\Omega)^*}||\hat{u}||_{L^6(\Omega)}+||\nabla \hat{u}||_{L^6(\Omega)}).
\end{align*}
Choose $\mu > \lambda$ and applying Gronwall's inequality, we obtain
\begin{align*}
    ||(\varphi_1)_N(t)||^2_{L^2(\Omega)} +  \lambda|| \nabla (\varphi_2)_N(t)||^2_{L^2(\Omega)}+2(\mu-\lambda)\int_{0}^{t}  || \nabla (\varphi_1)_N||^2_{L^2(\Omega)}\,dt\\
     +|| \lambda (\varphi_2)_N(t)||^2_{L^2(\Omega)}+ \int_{0}^{t}\Big(\lambda || \nabla (\varphi_3)_N||^2_{L^2(\Omega)}+||\Delta (\varphi_2)_N||^2_{L^2(\Omega)}\Big)\,dt\\
     \leq \,\, C \Big(\int_{0}^{t}  || h(t)||^2_{L^2(\Omega)}\,dt\Big) exp\Big(\int_{0}^{t} \mathcal{C}(t) dt\Big)
\end{align*}
for a.e. $t \in (0,T)$.
Again, thanks to 
\begin{align*}
    \int_{\Omega} (\varphi_3)_N\,dx= \int_{\Omega} f'(\hat{u}) (\varphi_2)_N\,dx \leq ||f'(\hat{u})||_{L^2(\Omega)} ||(\varphi_2)_N||_{L^2(\Omega)}< \infty,
\end{align*}
which implies $(\varphi_3)_N \in L^2(0,T; H^1(\Omega))$ and 
\begin{align*}
    || \nabla \Delta (\varphi_2)_N||_{L^2(\Omega)} \leq &\,\,|| \nabla (\varphi_3)_N||_{L^2(\Omega)}+|| \nabla (f'(\hat{u}) (\varphi_2)_N)||_{L^2(\Omega)}\\
    \leq & \,\,|| \nabla (\varphi_3)_N||_{L^2(\Omega)} + ||f''(\hat{u})||_{L^6(\Omega)} || \nabla \hat{u} ||_{L^6(\Omega)}|| (\varphi_2)_N||_{L^6(\Omega)}\\
    & \hspace{4cm}+ ||f'(\hat{u})||_{L^3(\Omega)} ||\nabla (\varphi_2)_N||_{L^6(\Omega)}.
\end{align*}

Furthermore,
\begin{align*}
     \int_{0}^{t}||\partial_t (\varphi_1)_N||_{\mathcal{V}^*}\,dt \leq \int_{0}^{t} \Big( \mu ||\nabla (\varphi_1)_N||_{L^2(\Omega)}+  \lambda || (\varphi_2)_N||_{L^6(\Omega)}|| \nabla \hat{w}||_{L^2(\Omega)}\\
      +  \lambda ||\hat{u}||_{L^6(\Omega)} || \nabla (\varphi_3)_N||_{L^2(\Omega_2)}+ || h||_{L^2(\Omega)}\Big)\,dt
\end{align*}
and 
\begin{align*}
    \int_{0}^{t}||\partial_t (\varphi_2)_N||_{H^1(\Omega)^*}\,dt \leq \int_{0}^{t} \Big(||\Delta (\varphi_3)_N||_{L^2(\Omega)}+|| (\varphi_1)_N||_{L^3(\Omega)}||\nabla \hat{u}||_{L^2(\Omega)}\\
    + || \hat{v}||_{L^3(\Omega)}|| \nabla (\varphi_2)_N||_{L^2(\Omega)}+ \alpha || (\varphi_2)_N||_{L^2(\Omega)}\Big)\,dt
\end{align*}
for a.e. $t \in (0,T)$.\\

Since all the boundedness are uniform, we can extract subsequences from $\{(\varphi_1)_N\}_N$, $\{\partial_t (\varphi_1)_N\}_N$, $\{(\varphi_2)_N\}_N$, $ \{\partial_t(\varphi_2)_N\}_N$, $\{(\varphi_3)_N\}_N$ still indexed by same symbol s.t.

  \begin{itemize}
      \item[(i)] $(\varphi_1)_N \overset{w}{\rightharpoonup} \varphi_1$ in $L^2(0,T;\mathcal{V})$,
      \item[(ii)]  $\partial_t( \varphi_1)_N \overset{w}{\rightharpoonup} \partial_t \varphi_1$ in $L^2(0,T;\mathcal{V}^*)$,
      \item[(iii)] $(\varphi_2)_N \overset{w}{\rightharpoonup} \varphi_2 $ in $L^2(0,T;H^3(\Omega))$,
      \item[(iv)]  $(\varphi_2)_N \overset{w^*}{\rightharpoonup} \varphi_2 $ in $L^\infty(0,T;H^1(\Omega))$,
      \item[(v)] $\partial_t(\varphi_2)_N  \overset{w}{\rightharpoonup}  \partial_t(\varphi_2)$ in $L^2(0,T;H^1(\Omega)^*)$,
      \item[(vi)]  $(\varphi_3)_N \overset{w}{\rightharpoonup} \varphi_3$ in $L^2(0,T;H^1(\Omega))$.
  \end{itemize} 

Now, passing these limits in approximate problem, we get a weak solution.\\

\par Finally, let $((\varphi_1)_1, (\varphi_2)_1, (\varphi_3)_1)$ and $((\varphi_1)_2, (\varphi_2)_2, (\varphi_3)_2)$ be two weak solutions of \eqref{lin1} -- \eqref{lin6}. Further assume that $\varphi_1=(\varphi_1)_1-(\varphi_1)_2$, $\varphi_2=(\varphi_2)_1-(\varphi_2)_2$ and $\varphi_3=(\varphi_3)_1-(\varphi_3)_2$, then $(\varphi_1, \varphi_2, \varphi_3)$ satisfies the following equations
\begin{align*}
 &\frac{\partial \varphi_1}{\partial t}- \mu \Delta \varphi_1+ \nabla \bar{p
            }=- \lambda \varphi_2 \nabla \hat{w}- \lambda\hat{u} \nabla \varphi_3 &&\text{in}\quad (0,T)\times  \Omega,\\ 
            &\frac{\partial \varphi_2}{\partial t}- \Delta \varphi_3 + \varphi_1 \cdot \nabla \hat{u}+\hat{ v} \cdot \nabla \varphi_2+ \alpha \varphi_2=0 &&\text{in}\quad (0,T)\times  \Omega,\\
            &\varphi_3= -\Delta \varphi_2+ f'(\hat{u})\varphi_2 && \text{in}\quad (0,T)\times  \Omega,\\
            &\nabla \cdot \varphi_1=0&& \text{in}\quad (0,T)\times  \Omega,\\
            &\varphi_1(x,t)=0,\, \nabla \varphi_2 \cdot \vec{n}=0,\,\nabla \varphi_3 \cdot \vec{n}=0&& \text{on}\quad (0,T)\times  \partial\Omega,\\
            &\varphi_1(x,0)=0,\, \varphi_2(x,0)=0&& \text{in}\quad\Omega,    
\end{align*}
where $\Bar{p}= \Bar{p}_{(\varphi_1)_1}-\Bar{p}_{(\varphi_1)_2}$. \\
\par Deriving a similar type of estimate of Theorem \ref{thm 4.2} for $h\equiv 0$, we get
\begin{equation*}
    (\varphi_1)_1=(\varphi_1)_2,\, (\varphi_2)_1=(\varphi_2)_2,\, (\varphi_3)_1=(\varphi_3)_2
\end{equation*}
for a.e. in $(0,T)\times  \Omega$.

\end{proof}

\subsection{Differentiability of control to state operator}
\begin{thm}
Let the assumptions (A1) -- (A4) ar satisfied. Then, for any $\hat{\theta} \in \mathcal{U}$, the control to sate mapping $\mathcal{S}$ is Fr\'echet differentiable in $\mathcal{U}$ as a mapping from $L^2(0,T; \mathcal{H})$ into the space $\mathcal{F}$. Moreover, for any $h \in L^2(0,T;\mathcal{H})$, its Fr\'echet derivative $\mathcal{DS}$ is given by
 \begin{equation*}
        \mathcal{DS}(\hat{\theta})(h)=(\varphi_1, \varphi_2, \varphi_3),
    \end{equation*}
where $(\varphi_1, \varphi_2, \varphi_3)$ is the weak solution to the linearized system \eqref{lin1} -- \eqref{lin6} with respect to $h$.

\end{thm}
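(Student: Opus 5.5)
To prove Fréchet differentiability I will take $\hat{\theta}\in\mathcal{U}$ and $h\in L^2(0,T;\mathcal{H})$ with $\|h\|_{L^2(0,T;\mathcal{H})}$ small enough that $\hat{\theta}+h\in\mathcal{U}$ ($\mathcal{U}$ is open by (A3)). Put $(v^h,u^h,w^h)=\mathcal{S}(\hat{\theta}+h)$ and $(\hat v,\hat u,\hat w)=\mathcal{S}(\hat{\theta})$, and let $(\varphi_1,\varphi_2,\varphi_3)$ be the solution of \eqref{lin1}--\eqref{lin6} given by Theorem \ref{thm 4.2}. Define the remainders
\begin{equation*}
y=v^h-\hat v-\varphi_1,\quad z=u^h-\hat u-\varphi_2,\quad q=w^h-\hat w-\varphi_3 .
\end{equation*}
It then suffices to show $\|(y,z,q)\|_{\mathcal{F}}\le C\|h\|^2_{L^2(0,T;\mathcal{H})}$. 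This gives $\|(y,z,q)\|_{\mathcal{F}}=o(\|h\|)$, so the bounded linear map $h\mapsto(\varphi_1,\varphi_2,\varphi_3)$ is the Fréchet derivative. Linearity and boundedness of this map follow from uniqueness and the estimate in Theorem \ref{thm 4.2}.

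\textbf{Step 1: the remainder system.} Subtracting the state equations and \eqref{lin1}--\eqref{lin6}, and writing $\delta v=v^h-\hat v$ (and similarly $\delta u$, $\delta w$), the triple $(y,z,q)$ has zero initial data and homogeneous boundary conditions, and solves
\begin{align*}
&\partial_t y-\mu\Delta y+\nabla \tilde p=-\lambda z\nabla\hat w-\lambda\hat u\nabla q-\lambda\,\delta u\,\nabla\delta w,\\
&\partial_t z-\Delta q+y\cdot\nabla\hat u+\hat v\cdot\nabla z+\alpha z=-\delta v\cdot\nabla\delta u,\\
&q=-\Delta z+f'(\hat u)z+R,\qquad R=f(u^h)-f(\hat u)-f'(\hat u)\delta u=(3\hat u+\delta u)(\delta u)^2 .
\end{align*}
This is exactly the linearized system \eqref{lin1}--\eqref{lin6} with $h$ replaced by quadratic source terms, plus the extra term $R$ in the chemical potential relation.

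\textbf{Step 2: energy estimates.} I will repeat the energy estimates from the proof of Theorem \ref{thm 4.2}. Test the first equation with $y$, the second with $\lambda q$ and with $4\lambda z$, and the third with $\lambda\partial_t z$ and with $-\lambda\Delta z$. The left-hand sides and the linear couplings are handled exactly as before, with the same Gronwall factor $\mathcal{C}(t)$ and the same standing assumption $\mu>\lambda$. The mean value of $z$ is controlled as before, since the Oono term gives $\frac{d}{dt}\int_\Omega z\,dx=-\alpha\int_\Omega z\,dx$. The new source terms are estimated using Theorem \ref{thm 3.3}, which gives $\|\delta v\|_{L^2(0,T;\mathcal{V})}+\|\delta u\|_{L^2(0,T;H^3)\cap L^\infty(0,T;H^1)}+\|\nabla\delta w\|_{L^2L^2}\le C\|h\|$. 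For example,
\begin{equation*}
\int_0^t\!\!\int_\Omega \delta u\,\nabla\delta w\cdot y\le \int_0^t\|\delta u\|_{L^\infty}\|\nabla\delta w\|_{L^2}\|y\|_{L^2}\,dt\le C\|h\|^4+\int_0^t\|y\|^2_{L^2}\,dt ,
\end{equation*}
and $\delta v\cdot\nabla\delta u$ is handled similarly using $\|\delta v\|_{L^2 L^6}\|\nabla\delta u\|_{L^\infty L^2}$ and the embedding $H^3\hookrightarrow L^\infty$.

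\textbf{Step 3: the term $R$, which I expect to be the main obstacle.} Testing $q=-\Delta z+f'(\hat u)z+R$ with $\partial_t z$ produces $\int R\,\partial_t z$, which is not directly controlled because $\partial_t z$ is only bounded in $H^1(\Omega)^*$. I will first try the alternative of substituting $q$ into the second equation and testing with $-\Delta z$ or with $z$, in the style of the proof of Theorem \ref{thm 3.3}. That approach needs $\nabla R$ bounded in $L^2(0,T;L^2)$. Since $\nabla R$ is a combination of products such as $\hat u\,\delta u\,\nabla\delta u$ and $(\delta u)^2\nabla\hat u$, Hölder's inequality with the $L^\infty(H^1)\cap L^2(H^3)$ bounds on $\delta u$ and $\hat u$ gives $\|\nabla R\|_{L^2L^2}\le C\|h\|^2$. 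If the $\partial_t z$ test cannot be avoided, I will integrate by parts in time, using $\partial_t(\delta u)$ bounded in $L^2(H^1)^*$ and $\hat u\in C([0,T];H^1)$.

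\textbf{Step 4: closing.} Gronwall's inequality then yields
\begin{equation*}
\|y\|^2_{L^\infty L^2}+\|y\|^2_{L^2\mathcal V}+\|z\|^2_{L^\infty H^1}+\|z\|^2_{L^2H^3}+\|q\|^2_{L^2H^1}\le C\|h\|^4 .
\end{equation*}
The time-derivative bounds for $\partial_t y$ in $L^2(0,T;\mathcal{V}^*)$ and $\partial_t z$ in $L^2(0,T;H^1(\Omega)^*)$ then follow from the equations by duality, as in Theorem \ref{thm 3.3}. This completes the proof.
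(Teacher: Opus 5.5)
Your proposal is correct and is essentially the paper's argument. The paper also forms the remainder $(y_1,y_2,y_3)=\mathcal{S}(\hat\theta+h)-\mathcal{S}(\hat\theta)-(\varphi_1,\varphi_2,\varphi_3)$ and tests with $y_1$ and $-\lambda\Delta y_2$ after substituting the chemical-potential relation, which is your Step~3 route, so the $\partial_t z$ test is never needed. It then controls the superlinear terms through Theorem~\ref{thm 3.3} and closes with Gronwall plus duality for the time derivatives.

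One small repair is needed in Step~2. Your sample bound $\int_0^t\|\delta u\|_{L^\infty}\|\nabla\delta w\|_{L^2}\|y\|_{L^2}\,dt\le C\|h\|^4+\int_0^t\|y\|^2_{L^2}\,dt$ requires $\|\delta u\|_{L^\infty}\|\nabla\delta w\|_{L^2}\in L^2(0,T)$, and Theorem~\ref{thm 3.3} does not provide this. Use instead $\|\delta u\|_{L^3}\|\nabla\delta w\|_{L^2}\|y\|_{L^6}\le C\|\delta u\|_{H^1}\|\nabla\delta w\|_{L^2}\|\nabla y\|_{L^2}$ and absorb into $\mu\|\nabla y\|^2_{L^2}$. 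This costs only $C\|\delta u\|^2_{L^\infty(0,T;H^1)}\|\nabla\delta w\|^2_{L^2(0,T;L^2)}\le C\|h\|^4$.
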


\begin{proof}
For any fixed $\hat{\theta} \in \mathcal{U}$, let $\mathcal{S}(\hat{\theta})=(\hat{v},\hat{u},\hat{w}) $  be the associated solution to the state equations $\mathcal{(P)}$. Now, $\mathcal{U}$ is an open subset of of $L^2(0,T;\mathcal{H})$, then there exists some $\lambda>0$ such that for any $h \in L^2(0,T; \mathcal{H})$ with $||h||_{L^2(0,T;\mathcal{H})} \leq \lambda$, we have $\hat{\theta}+h \in \mathcal{U}$. For such $ h \in L^2(0,T;\mathcal{H})$, let 
$(v^h, u^h, w^h)$ be the solution of the system $\mathcal{(P)}$ with respect to $\hat{\theta}+h$. Define $v:=v^h-\hat{v}$, $u:=u^h- \hat{u}$ and $w:=w^h-\hat{w}$, then $(v,u,w)$ satisfies
\begin{align*}
    &\frac{\partial v}{\partial t}- \mu \Delta v+ \nabla p_v= -\lambda (u \nabla w + \hat{u} \nabla w+ u \nabla \hat{w})+h&& \text{in}\quad (0,T)\times  \Omega,\\ 
    &\nabla \cdot v\;\;=\;\;0	&& \textnormal { in }\quad (0,T)\times \Omega, \\
    &v\;\;=\;\;0 && \textnormal { on }\quad (0,T)\times \partial \Omega,
			\\
			 &v (0,x) 
			=  0
			&& \text{in }\quad \Omega,\\
    &\frac{\partial u}{\partial t}+  v \cdot \nabla u+  v \cdot \nabla \hat{u}+ \hat{v}\cdot \nabla u+\alpha u=\Delta w&& \text{in}\quad (0,T)\times  \Omega,\\
    &w= -\Delta u + f(u^h)-f(\hat{u})&& \text{in}\quad (0,T)\times  \Omega,\\
   & \nabla u \cdot \vec{n} =0
			&& \text{on } \quad (0,T)\times \partial\Omega,
			\\
			&\nabla w \cdot \vec{n}=0
			&& \text{on }\quad (0,T)\times \partial\Omega,
			\\
			&u(0,x)=0
			&& \text{in }\quad \Omega,
    \end{align*}
where $p_v= p_{v^h}-p_{\hat{v}}$.\par
Again, define $ y_1:= v-\varphi_1$, $y_2:=u-\varphi_2$ and $y_3:= w-\varphi_3$, then $(y_1,y_2,y_3)$ satisfies 
\begin{align}
    &\frac{\partial y_1}{\partial t}- \mu \Delta y_1+ \nabla p_{y_1}=-\lambda (u \nabla w+ \hat{u} \nabla y_3+ y_2 \nabla \hat{w})&& \text{in}\quad (0,T)\times  \Omega, \label{fre1}\\
    &\nabla \cdot y_1=0 && \textnormal { in }\quad (0,T)\times \Omega, \label{fre2}\\
  & y_1\;\;=\;\;0 && \textnormal { on }\quad (0,T)\times \partial \Omega, \label{fre3}\\
 &y_1 (0,x)=  0 && \text{in }\quad \Omega,\label{fre4}\\
 &\frac{\partial y_2}{\partial t} +  v \cdot \nabla u+  y_1 \cdot \nabla \hat{u} +  \hat{v} \cdot \nabla y_2 =\Delta y_3-\alpha y_2 && \text{in}\quad (0,T)\times  \Omega,\label{fre5}\\
 &y_3=-\Delta y_2+f(u^h)-f(\hat{u})-f'(\hat{u})\varphi_2&& \text{in}\quad (0,T)\times  \Omega,\label{fre6}\\
&\nabla y_2 \cdot \vec{n} =0 && \text{on } \quad (0,T)\times \partial\Omega, \label{fre7}\\
			&\nabla y_3\cdot \vec{n}=0
			&& \text{on }\quad (0,T)\times \partial\Omega,
			\label{fre8}
			\\
			&y_2(0,x)=0
			&& \text{in }\quad \Omega,\label{fre9}
\end{align}
where $p_{y_1}= p_{v}-p_{\varphi_1}$.
    \par Multiplying $y_1$ and $-\lambda \Delta y_2$ with \eqref{fre1} and \eqref{fre5}, respectively and integrating the resulting equalities over $\Omega$, we obtain
    \begin{align}
        &\frac{1}{2} \frac{d}{dt}|| y_1||^2_{L^2(\Omega)}+  \mu|| \nabla y_1||^2_{L^2(\Omega)}+ \frac{\lambda}{2}\frac{d}{dt}|| \nabla y_2||^2_{L^2(\Omega)}\nonumber\\
        =& -\lambda \int_{\Omega}(u \nabla w+ \hat{u} \nabla y_3+ y_2 \nabla \hat{w})\cdot y_1\,dx \nonumber\\& \hspace{1cm}- \lambda\int_{\Omega}( \Delta y_3-  v \cdot \nabla u-  y_1 \cdot \nabla \hat{u} -  \hat{v} \cdot \nabla y_2 - \alpha y_2)\Delta y_2dx \nonumber\\
        =&\,  \lambda \int_{\Omega}\Big( y_1 \cdot \nabla u \,w + y_1 \cdot \nabla y_2\, \hat{w}+ y_1 \cdot \nabla \hat{u}(-\Delta y_2+f(u^h)-f(\hat{u})-f'(\hat{u}))\Big)\,dx \nonumber\\
        & \hspace{1cm}-\lambda\int_{\Omega} ( \Delta y_3-  v \cdot \nabla u-  \hat{v} \cdot \nabla y_2 -\alpha y_2)\Delta y_2\,dx+  \lambda\int_{\Omega} y_1 \cdot \nabla \hat{u} \Delta y_2\,dx. \label{fre10}
    \end{align}
    Now, 
    \begin{align*}
         & \lambda \int_{\Omega}\Big( y_1 \cdot \nabla u \,w + y_1 \cdot \nabla y_2\, \hat{w}+ y_1 \cdot \nabla \hat{u}(f(u^h)-f(\hat{u})-f'(\hat{u}))\Big)\,dx\\
         = &\,  \lambda \int_{\Omega} \Big(y_1 \cdot \nabla u \,(-\Delta u + f(u^h)-f(\hat{u}))+ y_1 \cdot \nabla y_2\, \hat{w}+ y_1 \cdot \nabla \hat{u}\big(f(u^h)-f(\hat{u})-f'(\hat{u})\big)\Big)\,dx\\
         \leq &\, \lambda ||y_1||_{L^3(\Omega)}|| u||_{L^6(\Omega)}|| \nabla \Delta u||_{L^2(\Omega)}+ \lambda ||y_1||_{L^3(\Omega)}|| u||_{L^6(\Omega)} || \nabla (f(u^h)-f(\hat{u}))||_{L^2(\Omega)}\\
         &\,\hspace{2cm}+ \lambda || y_1||_{L^3(\Omega)}|| \hat{u}||_{L^6(\Omega)} ||\nabla(f(u^h)-f(\hat{u})-f'(\hat{u})\varphi_2) ||_{L^2(\Omega)}\\
         & \hspace{3cm}+ \lambda ||y_1||_{L^3(\Omega)} ||y_2||_{L^6(\Omega)} || \nabla \hat{w}||_{L^2(\Omega)}\\
         \leq &\,  \lambda ||y_1||_{L^3(\Omega)}|| \nabla u||_{L^2(\Omega)}|| \nabla \Delta u||_{L^2(\Omega)}+ \lambda ||y_1||_{L^3(\Omega)}|| \nabla u||_{L^2(\Omega)} || \nabla (f(u^h)-f(\hat{u}))||_{L^2(\Omega)}\\
         &\,\hspace{1cm}+  \lambda|| y_1||_{L^3(\Omega)}|| \hat{u}||_{L^6(\Omega)} ||\nabla(f(u^h)-f(\hat{u})-f'(\hat{u})\varphi_2) ||_{L^2(\Omega)} \\
         &\hspace{2cm}+  \lambda ||y_1||_{L^3(\Omega)} ||y_2||_{L^6(\Omega)} || \nabla \hat{w}||_{L^2(\Omega)}
    \end{align*}
    and 
    \begin{align*}
    &-\lambda\int_{\Omega} ( \Delta y_3-  v \cdot \nabla u-  \hat{v} \cdot \nabla y_2 -\alpha y_2)\Delta y_2\,dx\\
        =&\, -\lambda  \int_{\Omega}  \Big( \Delta \big(-\Delta y_2+f(u^h)-f(\hat{u})-f'(\hat{u})\varphi_2\big)-  v \cdot \nabla u-  \hat{v} \cdot \nabla y_2 -\alpha y_2 \Big)\Delta y_2\,dx\\
        \leq & \,-\lambda || \nabla \Delta y_2||^2_{L^2(\Omega)}+ || \nabla \big( f(u^h)-f(\hat{u})-f'(\hat{u})\varphi_2\big)||_{L^2(\Omega)}||\nabla \Delta y_2||_{L^2(\Omega)}
        \\&+ \lambda|| \hat{v}||_{L^3(\Omega)} || y_2||_{L^6(\Omega)} ||\nabla \Delta y_2||_{L^2(\Omega)}+  \lambda ||v||_{L^3(\Omega)} ||u||_{L^6(\Omega)} ||\nabla \Delta y_2||_{L^2(\Omega)}.
    \end{align*}

    Now,
    \begin{align*}
        &||\nabla(f(u^h)-f(\hat{u})-f'(\hat{u})\varphi_2) ||_{L^2(\Omega)}\\
        =&\, ||\nabla((u^h)^3- (\hat{u})^3-3(\hat{u}) \varphi_2 -y_2)||_{L^2(\Omega)}\\
        =&\, ||\nabla (u^3+ 3(u^h)^2\hat{u}-3 u^h(\hat{u})^2-3(\hat{u}) \varphi_2 -y_2)||_{L^2(\Omega)}\\
        =&\, ||\nabla(u^3+ 3u^2\hat{u}+ 3(\hat{u})^2 y_2 -y_2 )||_{L^2(\Omega)}\\
        \leq & \, 3||u||^2_{L^6(\Omega)}||\nabla u||_{L^6(\Omega)}+ 6||u||_{L^6(\Omega)}||\nabla u||_{L^6(\Omega)} ||\hat{u}||_{L^6(\Omega)}+3 ||u||^2_{L^6(\Omega)}||\nabla \hat{u}||_{L^6(\Omega)}\\
        &\hspace{1cm}+ 6 ||\hat{u}||_{L^6(\Omega)}|| \nabla \hat{u} ||_{L^6(\Omega)}||y_2||_{L^6(\Omega)}+3 ||\hat{u}||^2_{L^\infty(\Omega)}||\nabla y_2||_{L^2(\Omega)}+ ||\nabla y_2 ||_{L^2(\Omega)}.
    \end{align*}
   Hence \eqref{fre10} yields 
    \begin{align*}
        \frac{d}{dt}|| y_1||^2_{L^2(\Omega)}+  2(\mu-\lambda)|| \nabla y_1||^2_{L^2(\Omega)}+ \frac{d}{dt}\lambda || \nabla y_2||^2_{L^2(\Omega)} + \lambda ||\nabla \Delta y_2||^2_{L^2(\Omega)}\\
        \leq C_1(t)\Big(||y_1||^2_{L^2(\Omega)})+ \lambda ||\nabla y_2||^2_{L^2(\Omega)}\Big)+C_2(t),\\
        \end{align*}
        where
        \begin{align*}
           C_1(t)= C\Big(1+ ||\hat{v}||^2_{L^3(\Omega)}+ ||v||^2_{L^3(\Omega)}+|| \nabla \hat{w}||^2_{L^2(\Omega)}+||\hat{u}||^4_{L^6(\Omega)}||\nabla \hat{u}||^2_{L^6(\Omega)}\notag\\
           + ||\hat{u}||^2_{L^6(\Omega)}||\hat{u}||^4_{L^\infty}+||\hat{u}||^2_{L^6(\Omega)}||\nabla \hat{u}||^2_{L^6(\Omega)}+ ||\hat{u}||^4_{L^\infty(\Omega)}\Big )
        \end{align*}
        and 
        \begin{align*}
            C_2(t)= C\Big(|| \nabla u||^2_{L^2(\Omega)}||\nabla \Delta u||^2_{L^2(\Omega)}+||u||^2_{L^6(\Omega)}|| \nabla (f(u^h)-f(\hat{u}))||^2_{L^2(\Omega)}\\
+||\hat{u}||^2_{L^6(\Omega)}||u||^4_{L^6}||\nabla u||^2_{L^6(\Omega)}+ ||\hat{u}||^4_{L^6(\Omega)}||u||^2_{L^6(\Omega)}|| \nabla u||^2_{L^6(\Omega)}\\
            + ||\hat{u}||^2_{L^6(\Omega)}||u||^4_{L^6(\Omega)}||\nabla \hat{u}||^2_{L^6(\Omega)}+||u||^4_{L^6(\Omega)}||\nabla u||^2_{L^6(\Omega)}\\
            + ||u||^2_{L^6(\Omega)}||\nabla u||^2_{L^6(\Omega)}||\hat{u}||^2_{L^6(\Omega)}+||u||^4_{L^6(\Omega)}||\nabla \hat{u}||^2_{L^6(\Omega)}\Big).
        \end{align*}
        
       Now, Gronwall's inequality implies
       \begin{align*}
           || y_1(t)||^2_{L^2(\Omega)}+ \lambda || \nabla y_2(t)||^2_{L^2(\Omega)}+ \int_{0}^{T} \Big(2 (\mu-\lambda)|| \nabla y_1||^2_{L^2(\Omega)}+ \lambda ||\nabla \Delta y_2||^2_{L^2(\Omega)}\Big)\,dt\\
  \lesssim \Big(\int_{0}^{T} C_2(t) \,dt\Big)e^{\int_{0}^{T} C_1(t)\,dt}.
       \end{align*}
        Again,
        \begin{align*}
            \int_{0}^{T} || \partial_t y_1||_{\mathcal{V}^*} \,dt \leq \int_{0}^{T} \Big(  \mu ||\nabla y_1||_{L^2(\Omega)} + \lambda||u||_{L^3(\Omega)} || \nabla w||_{L^2(\Omega)}\\
            +\lambda||\hat{u}||_{L^3(\Omega)}|| \nabla y_3||_{L^2(\Omega)}
            + \lambda||y_2||_{L^3(\Omega)}||\nabla \hat{w}||_{L^3(\Omega)}\Big)\,dt
        \end{align*}
        and 
        \begin{align*}
            \int_{0}^{T} || \partial_t y_2||_{H^1(\Omega)^*}\,dt\leq  \int_{0}^{T}\Big( ||v||_{L^6(\Omega)}||  u||_{L^3(\Omega)}+  ||y_1||_{L^6(\Omega)}|| \hat{u}||_{L^3(\Omega)} \\
            + || \hat{v}||_{L^6(\Omega)} ||y_2||_{L^3(\Omega)}+
            ||\nabla y_3||_{L^2(\Omega)} + || y_2||_{L^2(\Omega)}\Big)\,dt.
        \end{align*}
        Hence, from the Theorem \ref{thm 3.3} and the explicit form of $C_1$ and $C_2$ implies that 
        \begin{align*}
            || y_1(t)||^2_{L^2(\Omega)}+  || \nabla y_2(t)||^2_{L^2(\Omega)}+ \int_{0}^{T} \Big(|| \nabla y_1||^2_{L^2(\Omega)}+ \lambda ||\nabla \Delta y_2||^2_{L^2(\Omega)}+ || \partial_t y_2||^2_{H^1(\Omega)^*}\\+ || \partial_t y_1||^2_{\mathcal{V}^*} 
            +||y_3||^2_{H^1(\Omega)}\Big)\,dt \leq C(t, ||h||_{L^2(0,T;\mathcal{H})})\,||h||^2_{L^2(0,T;\mathcal{H})},
        \end{align*}
        where $\displaystyle \lim_{||h|| \to 0} C(t,||h||)=0$. This implies that
        \begin{equation*}
            \lim_{||h|| \to 0} \frac{||\mathcal{S}(\hat{\theta}+h)-\mathcal{S}(\hat{\theta})-\mathcal{D}\mathcal{S}(\hat{\theta})(h)||_{\mathcal{F}}}{||h||}=0.
        \end{equation*}

\end{proof}

  In the subsequent analysis, we will establish the variational inequality that the optimal controls must fulfill. Since $J$ is a quadratic functional, we can find the Fr\'echet derivative of the $J(\theta)=(\mathcal{S}(\theta), \theta)$ by the chain rule. At $\hat{\theta} \in \mathcal{U}$
  \begin{equation*}
      \mathcal{D}J(\hat{\theta})= \mathcal{D}_{(v,u,w)}J(\mathcal{S}(\hat{\theta}), \hat{\theta}) \cdot \mathcal{D}\mathcal{S}(\hat{\theta})+ \mathcal{D}_\theta J(\mathcal{S}(\hat{\theta}), \hat{\theta}).
  \end{equation*}
  Now, using the convexity of $\mathcal{U}_{ad}$, we obtain that for any local minimizer $\hat{\theta} \in \mathcal{U}_{ad}$ of $J$ in $\mathcal{U}_{ad}$,
  \begin{align*}
      \mathcal{D}J(\hat{\theta})(\theta- \hat{\theta}) \geq 0 \,\, \text{for all $\theta \in \mathcal{U}_{ad}$}.
  \end{align*}
  We summarize these facts in the following result.
  \begin{thm}\label{thm 4.4}
    Assume that the assumptions (A1) -- (A4) hold true.  Let $\hat{\theta} \in \mathcal{U}_{ad}$ be an local optimal control for $\mathcal{(OP)}$ with the associated state $\mathcal{S}(\hat{\theta})= (\hat{v}, \hat{u}, \hat{w})$, then for any $\theta \in \mathcal{U}_{ad}$, we have 
\begin{align*}
    \int_{(0,T)\times  \Omega}(\hat{u}-u_d)\varphi_2\, \di(x,t)+ \int_{(0,T)\times  \Omega}(\hat{v}-v_d)\cdot \varphi_1 \,\di(x,t)\\+ \beta \int_{(0,T)\times  \Omega} (\theta- \hat{\theta})\cdot \hat{\theta}\,\di(x,t) \geq 0,
\end{align*}
    where $\mathcal{D}\mathcal{S}(\hat{\theta})(\theta-\hat{\theta})= (\varphi_1, \varphi_2, \varphi_3)$ is the unique weak solution to the linearized system \eqref{lin1} -- \eqref{lin6} for $h=\theta-\hat{\theta}$. 
  \end{thm}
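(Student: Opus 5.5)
The proof is the standard first-order argument for a reduced cost functional over a convex admissible set. It combines the Fréchet differentiability of $\mathcal{S}$ from the preceding theorem with the chain rule for the quadratic functional $J$.

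We write the reduced functional as $j(\theta)=J(\mathcal{S}(\theta),\theta)$, where
\begin{equation*}
J((v,u,w),\theta)=\frac12\|v-v_d\|^2_{L^2((0,T)\times\Omega)}+\frac12\|u-u_d\|^2_{L^2((0,T)\times\Omega)}+\frac{\beta}{2}\|\theta\|^2_{L^2((0,T)\times\Omega)}.
\end{equation*}
First we check that $J$ is Fréchet differentiable on $\mathcal{F}\times L^2(0,T;\mathcal{H})$. It is a continuous quadratic form, because $\mathcal{F}$ embeds continuously into $L^2(0,T;L^2(\Omega)^n)\times L^2((0,T)\times\Omega)$ in the first two components. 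By assumption (A4), $v_d,u_d$ lie in these $L^2$ spaces. The partial derivatives are
\begin{align*}
\mathcal{D}_{(v,u,w)}J[(\varphi_1,\varphi_2,\varphi_3)]&=\int_{(0,T)\times\Omega}(v-v_d)\cdot\varphi_1\,\di(x,t)+\int_{(0,T)\times\Omega}(u-u_d)\varphi_2\,\di(x,t),\\
\mathcal{D}_\theta J[h]&=\beta\int_{(0,T)\times\Omega}\theta\cdot h\,\di(x,t).
\end{align*}
The chain rule, together with the theorem on differentiability of the control-to-state operator, then gives $\mathcal{D}j(\hat\theta)h$ as the sum of these two expressions evaluated at $(\hat v,\hat u,\hat w)$ and $\mathcal{DS}(\hat\theta)h=(\varphi_1,\varphi_2,\varphi_3)$. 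Here $(\varphi_1,\varphi_2,\varphi_3)$ is the unique weak solution of \eqref{lin1}--\eqref{lin6} supplied by Theorem \ref{thm 4.2}.

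Second, we use local optimality and convexity. $\mathcal{U}_{ad}$ is convex, being defined by pointwise box constraints, and by (A3) it is contained in the open set $\mathcal{U}$. Fix $\theta\in\mathcal{U}_{ad}$ and put $\theta_s=\hat\theta+s(\theta-\hat\theta)$ for $s\in(0,1]$. Then $\theta_s\in\mathcal{U}_{ad}$, and $\|\theta_s-\hat\theta\|\to0$ as $s\to0$. For $s$ small, local optimality gives $j(\theta_s)\ge j(\hat\theta)$. Dividing by $s$ and letting $s\downarrow0$, Fréchet (hence Gâteaux) differentiability yields $\mathcal{D}j(\hat\theta)(\theta-\hat\theta)\ge0$. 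Substituting $h=\theta-\hat\theta$ into the formula above gives exactly the claimed inequality.

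The only delicate point is justifying that the chain rule applies: $\mathcal{S}$ must be differentiable into a space on which $J$ is differentiable, and $\hat\theta$ must be an interior point of the domain of $\mathcal{S}$. The first is handled by the continuous embedding of $\mathcal{F}$ into the $L^2$ spaces. The second holds because $\hat\theta\in\mathcal{U}_{ad}\subset\mathcal{U}$ and $\mathcal{U}$ is open. A further consequence is that the remainder $o(s)$ in $j(\theta_s)-j(\hat\theta)$ is controlled by the Fréchet remainder estimate of the previous theorem, so dividing by $s$ and passing to the limit is legitimate.
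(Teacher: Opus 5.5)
Your proposal is correct and follows the same route as the paper: the chain rule for the reduced functional $J(\mathcal{S}(\theta),\theta)$ via the Fréchet differentiability of $\mathcal{S}$, followed by the standard convexity argument along $\hat{\theta}+s(\theta-\hat{\theta})\in\mathcal{U}_{ad}$, which gives $\mathcal{D}J(\hat{\theta})(\theta-\hat{\theta})\geq 0$. You make explicit the details the paper states in one line: continuity of $J$ on $\mathcal{F}$, interiority of $\hat{\theta}$ in the open set $\mathcal{U}$, and the difference-quotient limit.
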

  \subsection{ First--order optimality condition} We now proceed to establish the first-order optimality condition for the control problem $\mathcal{(OP)}$. To achieve this, we first derive the adjoint state and demonstrate the well-posedness of its weak solution. Subsequently, a straightforward variational analysis leads to the desired inequality.  \\

\begin{thm}
    Assume that the assumptions (A1) -- (A4) hold true. Let $\hat{\theta}$ be an local optimal control for $\mathcal{(OP)}$ with the associated state $(\hat{v},\hat{u},\hat{w})=\mathcal{S}(\hat{\theta})$, then the following adjoint state
\begin{subequations}
    \begin{align}
        &-\frac{\partial \g_1}{\partial t}-  \mu \Delta \g_1 + \nabla q= -\g_2\nabla \hat{u}+ (\hat{v}-v_d) && \text{in} \,\, (0,T) \times \Omega \label{adeq1}\\
       &\nabla \cdot \g_1  =0 && \text{in} \,\, (0,T) \times \Omega\label{adeq2}\\
        &\g_1(x,t)=0 && \text{on} \,\, (0,T) \times \partial \Omega\label{adeq3}\\
        &\g_1(x,T)=0 && \text{in} \,\, \Omega\label{adeq4}\\
        &-\frac{\partial \g_2}{\partial t}-  \hat{v} \cdot \nabla \g_2+\alpha \g_2+ \lambda \nabla \hat{w} \cdot \g_1+f'(\hat{u})\g_3\notag\\
        &\hspace{4cm}=(\hat{u}-u_d)+ \Delta \g_3 && \text{in} \,\, (0,T) \times \Omega\label{adeq5}\\
        &\g_3=-\Delta \g_2-\lambda \nabla \hat{u} \cdot \g_1  && \text{in} \,\, (0,T) \times \Omega\label{adeq6}\\
        &\nabla \g_2 \cdot \vec{\eta}=0,\,\nabla \g_3 \cdot \vec{\eta}=0 &&\text{on} \,\, (0,T) \times \partial \Omega\label{adeq7}\\
        &\g_2(x,T)=0&& \text{in} \,\, \Omega\label{adeq8}
    \end{align}
\end{subequations}
has a unique solution $(\g_1, \g_2, \g_3)$ in $L^2(0,T;\mathcal{V}) \cap H^1(0,T;\mathcal{V}^*) \times L^2(0,T;H^2(\Omega)) \cap H^1(0,T;H^2(\Omega)) \times L^2((0,T)\times  \Omega)$.
\end{thm}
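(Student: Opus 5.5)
We reverse time, setting $s=T-t$ and $\tilde\g_i(s)=\g_i(T-s)$, $\tilde{\hat v}(s)=\hat v(T-s)$, and so on. Then \eqref{adeq1}--\eqref{adeq8} becomes a forward linear parabolic system with zero initial data. It is driven by $\hat v-v_d\in L^2(0,T;L^2(\Omega)^n)$ and $\hat u-u_d\in L^2((0,T)\times\Omega)$, and its coefficients $\hat v,\hat u,\nabla\hat u,\nabla\hat w,f'(\hat u)$ are controlled by Theorem \ref{Theorem 3.1}, Theorem \ref{thm 3.2} and the embedding $H^3(\Omega)\hookrightarrow W^{1,\infty}(\Omega)$ for $n\le3$. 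We then run a Faedo--Galerkin scheme exactly as in Theorem \ref{thm 3.2} and Theorem \ref{thm 4.2}, with $\g_1^N\in Y_N$ and $\g_2^N,\g_3^N\in X_N$. Since the problem is linear, the ODE system has a global solution on $[0,T]$, so everything rests on $N$-uniform a priori bounds.

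\textbf{Basic energy estimate.} We test the reversed \eqref{adeq1} with $\g_1^N$ and the reversed \eqref{adeq5} with $\g_2^N$, and substitute \eqref{adeq6} for $\g_3$. The leading fourth-order term produces $\|\Delta\g_2^N\|^2_{L^2}$. The convective term $\int\hat v\cdot\nabla\g_2^N\,\g_2^N$ vanishes because $\nabla\cdot\hat v=0$ and $\hat v=0$ on $\partial\Omega$.

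\emph{Coupling terms.} These are $\int\g_2^N\nabla\hat u\cdot\g_1^N$, $\lambda\int\nabla\hat w\cdot\g_1^N\,\g_2^N$, $\lambda\int\Delta(\nabla\hat u\cdot\g_1^N)\g_2^N=\lambda\int(\nabla\hat u\cdot\g_1^N)\Delta\g_2^N$, and $\int f'(\hat u)\g_3^N\g_2^N$. Unlike the earlier proofs, which chose $\mu>\lambda$, we do \emph{not} absorb these into $\mu\|\nabla\g_1^N\|^2$ through a fixed size comparison. Instead we use H\"older and Young with an arbitrary $\varepsilon$:
\[
\lambda\Big|\int(\nabla\hat u\cdot\g_1^N)\Delta\g_2^N\Big|\le\varepsilon\|\Delta\g_2^N\|^2_{L^2}+C_\varepsilon\|\nabla\hat u\|^2_{L^\infty}\|\g_1^N\|^2_{L^2},
\]
and similarly
\[
\lambda\Big|\int\nabla\hat w\cdot\g_1^N\,\g_2^N\Big|\le\lambda\|\nabla\hat w\|_{L^2}\|\g_1^N\|_{L^3}\|\g_2^N\|_{L^6}\le\varepsilon\|\nabla\g_1^N\|^2+C_\varepsilon\|\nabla\hat w\|^2_{L^2}\|\g_2^N\|^2_{H^1},
\]
where $\|\g_2^N\|_{H^1}^2$ is interpolated between $\|\g_2^N\|_{L^2}$ and $\|\Delta\g_2^N\|_{L^2}$. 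The weights $\|\nabla\hat u\|^2_{L^\infty}\le C\|\hat u\|^2_{H^3}$ and $\|\nabla\hat w\|^2_{L^2}$ belong to $L^1(0,T)$, so Gronwall's inequality applies for any $\mu,\lambda>0$. This gives bounds for $\g_1^N$ in $L^\infty(0,T;L^2)\cap L^2(0,T;\mathcal V)$ and for $\g_2^N$ in $L^\infty(0,T;L^2)\cap L^2(0,T;H^2)$. From \eqref{adeq6} we then get $\g_3^N$ bounded in $L^2((0,T)\times\Omega)$, and comparison in \eqref{adeq1} bounds $\partial_t\g_1^N$ in $L^2(0,T;\mathcal V^*)$.

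\textbf{Higher estimate for $\g_2$ (main obstacle).} To reach $\g_2\in H^1(0,T;H^2(\Omega))$, we first test the reversed \eqref{adeq5} with $\Delta^2\g_2^N$, which is admissible since $X_N$ consists of Neumann eigenfunctions. This gives $\g_2^N\in L^\infty(0,T;H^2)\cap L^2(0,T;H^4)$. The difficult term is $\lambda\Delta(\nabla\hat u\cdot\g_1^N)$. It involves second derivatives of $\g_1^N$, which the Stokes energy alone does not control. We therefore first upgrade $\g_1^N$ by testing \eqref{adeq1} with $A\g_1^N$ and $\partial_t\g_1^N$, which yields $\g_1^N\in L^\infty(0,T;\mathcal V)\cap L^2(0,T;H^2)$. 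The right-hand side $-\g_2\nabla\hat u+(\hat v-v_d)$ is in $L^2(L^2)$ by the previous step; here $v_d\in L^2(0,T;\mathcal V)$ from (A4) helps. We then run the two estimates jointly in one Gronwall argument.

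Next we differentiate the Galerkin equation for $\g_2^N$ in time, or equivalently test it with $\Delta^2\partial_t\g_2^N$. This should give $\partial_t\g_2^N\in L^2(0,T;H^2)$. It requires $\partial_t\hat v,\partial_t\hat u,\partial_t\hat w$ in suitable spaces, which we take from Theorem \ref{Theorem 3.1} and the Lipschitz/regularity statements of Theorem \ref{thm 3.3}. I expect this step to be the delicate point of the proof: if the state regularity proves insufficient, one must bootstrap the state regularity first using (A2).

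\textbf{Passage to the limit and uniqueness.} We pass to the limit using weak and weak-$*$ compactness together with the Aubin--Lions lemma; since the system is linear, weak convergence suffices for all terms. The pressure $q$ is recovered as in Theorem \ref{thm 3.2} via Temam's proposition. For uniqueness, the difference of two solutions solves the homogeneous system with zero terminal data, and the basic energy estimate above, which uses no condition on $\mu,\lambda$, forces it to vanish.
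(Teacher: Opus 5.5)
Your basic energy estimate follows the paper's route: test \eqref{adeq1} with $\gamma_1$ and \eqref{adeq5} with $\gamma_2$, and use \eqref{adeq6} to produce $\|\Delta\gamma_2\|^2_{L^2}$. Absorbing the couplings via $\|\nabla\hat u\|_{L^\infty}\lesssim\|\hat u\|_{H^3}$ and $\varepsilon$-Young is an improvement. The paper imposes $\mu>\lambda$, which is not among (A1)--(A4), whereas your estimate and your uniqueness argument work for all $\mu,\lambda>0$.

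There is one bookkeeping slip. As written, you first reach $C_\varepsilon\|\nabla\hat w\|^2_{L^2}\|\gamma_2^N\|^2_{H^1}$ and only then interpolate $\|\gamma_2^N\|^2_{H^1}\lesssim\|\gamma_2^N\|_{L^2}\|\gamma_2^N\|_{H^2}$. The Gronwall weight then becomes $\|\nabla\hat w\|^4_{L^2}$, which is not known to be integrable, since $\hat w$ is only in $L^2(0,T;H^1(\Omega))$. Interpolate both factors first, $\|\gamma_1^N\|_{L^3}\lesssim\|\gamma_1^N\|^{1/2}_{L^2}\|\nabla\gamma_1^N\|^{1/2}_{L^2}$ and $\|\gamma_2^N\|_{L^6}\lesssim\|\gamma_2^N\|^{1/2}_{L^2}\|\gamma_2^N\|^{1/2}_{H^2}$. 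Then apply Young with exponents $(4,4,2)$; the weight becomes $\|\nabla\hat w\|^2_{L^2}\in L^1(0,T)$.

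The genuine gap is your ``higher estimate for $\gamma_2$''. The regularity $\partial_t\gamma_2\in L^2(0,T;H^2(\Omega))$ cannot be obtained under (A1)--(A4), whatever test functions you use. The forcing in \eqref{adeq5} contains two rough terms:
\begin{itemize}
\item $\hat u-u_d$, where $u_d$ is only assumed in $L^2(0,T;\mathcal{V})$, i.e.\ for a scalar at best $L^2(0,T;H^1(\Omega))$;
\item $\lambda\nabla\hat w\cdot\gamma_1$, where $\nabla\hat w$ is only in $L^2((0,T)\times\Omega)$.
\end{itemize}
Maximal regularity does not give the time derivative more spatial regularity than the forcing. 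Already for the model problem $\partial_s z+\Delta^2 z=g$, take a forcing $c_k\sin(\omega_k s)$ in the $k$-th Neumann mode (eigenvalue $\nu_k$) with $\omega_k\gg\nu_k^2$. This yields $\partial_s z\approx g$. Choosing $\sum_k\nu_k|c_k|^2<\infty=\sum_k\nu_k^2|c_k|^2$ gives an admissible $u_d$ with $\partial_s z\notin L^2(0,T;H^2)$. The lower-order couplings in \eqref{adeq1}--\eqref{adeq6} cannot restore two spatial derivatives.

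Since $u_d$ is data independent of the state, bootstrapping the state via (A2) cannot repair this. Your intermediate steps also fail on their own terms:
\begin{itemize}
\item The $\Delta^2\gamma_2^N$ test needs $\nabla\hat w\cdot\gamma_1^N$ bounded in $L^2((0,T)\times\Omega)$. This does not follow from $\nabla\hat w\in L^2((0,T)\times\Omega)$ and $\gamma_1\in L^4(0,T;L^\infty(\Omega))$.
\item Differentiating in time needs $\partial_t\hat w$ and $\partial_t\nabla\hat u$ in usable spaces. Theorem~\ref{Theorem 3.1} gives only $\partial_t\hat u\in L^2(0,T;H^1(\Omega)^*)$, and Theorem~\ref{thm 3.3} is a Lipschitz estimate in the control, not a time-regularity result.
\end{itemize}

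What the paper actually proves, and what the statement must mean, is $\gamma_2\in L^2(0,T;H^2(\Omega))\cap H^1(0,T;H^2(\Omega)^*)$. After the basic energy estimate, the paper simply bounds $\partial_t\gamma_2$ in $L^2(0,T;H^2(\Omega)^*)$ by comparison in \eqref{adeq5}. Replace your second step by this duality bound. For example, $|\langle\lambda\nabla\hat w\cdot\gamma_1,\phi\rangle|\le\lambda\|\nabla\hat w\|_{L^2}\|\gamma_1\|_{L^2}\|\phi\|_{L^\infty}$, and $|\langle\Delta\gamma_3,\phi\rangle|\le\|\gamma_3\|_{L^2}\|\Delta\phi\|_{L^2}$ for Neumann test functions. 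Use $H^2(\Omega)\hookrightarrow L^\infty(\Omega)$ for the remaining terms. Your first part, the limit passage and the uniqueness argument then prove the corrected statement, without the restriction $\mu>\lambda$.
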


\begin{proof}
The proof follows from a similar argument as in Theorem \ref{thm 3.2} using the Fadeo--Galerkin method. For the sake of simplicity, we omit the detailed construction of the approximation scheme and present only the a-priori estimates only. 

   \par  Multiplying \eqref{adeq1}, \eqref{adeq5} and \eqref{adeq6} by $\g_1$, $\g_2$ and $-\Delta \g_2+ f'(\hat{u})\g_2$, respectively and integrating the resulting equalities over $\Omega$ to obtain
\begin{align*}
    &\,-\frac{1}{2}\frac{d}{dt} ||\g_1||^2_{L^2(\Omega)}+ \mu || \nabla \g_1||^2_{L^2(\Omega)}-\frac{1}{2}\frac{d}{dt} ||\g_2||^2_{L^2(\Omega)}+ || \Delta \g_2||^2_{L^2(\Omega)}+ \alpha || \g_2||^2_{L^2(\Omega)}\\
    =&\,  \int_{\Omega}(\hat{v}-v_d)\g_1\,dx+  \int_{\Omega} (\hat{u}-u_d)\g_2\,dx- \int_{\Omega} \g_2 \nabla \hat{u}\cdot \g_1 \,dx - \lambda \int_{\Omega}\nabla \hat{w}\cdot \g_1 \g_2\, dx\\
    &\hspace{2cm}+ \lambda
     \int_{\Omega} f(\hat{u}) \g_1 \cdot \nabla \g_2\,dx+ \int_{\Omega} f'(\hat{u})  \g_2 \Delta \g_2\,dx-\lambda \int_{\Omega} \g_1 \cdot \nabla \hat{u} \Delta \g_2\,dx
     \end{align*}
     \begin{align*}
     =& \int_{\Omega}(\hat{v}-v_d)\g_1\,dx+  \int_{\Omega} (\hat{u}-u_d)\g_2\,dx + \int_{\Omega}\g_1 \cdot \nabla \g_2 (\hat{u}+\lambda \hat{w})\,dx +\int_{\Omega} f'(\hat{u})  \g_2 \Delta \g_2\,dx\\
     & \hspace{5cm}+\lambda
     \int_{\Omega} f(\hat{u}) \g_1 \cdot \nabla \g_2\,dx -\lambda \int_{\Omega} \g_1 \cdot \nabla \hat{u} \Delta \g_2\,dx\\
     =& \int_{\Omega}(\hat{v}-v_d)\g_1\,dx+  \int_{\Omega} (\hat{u}-u_d)\g_2\,dx + \int_{\Omega}\g_1 \cdot \nabla \g_2 ( \hat{u}-\lambda \Delta \hat{u}) + \int_{\Omega} f'(\hat{u})  \g_2 \Delta \g_2\,dx\\
     & \hspace{9cm}-\lambda \int_{\Omega} \g_1 \cdot \nabla \hat{u} \Delta \g_2\,dx\\
     \leq & \,||\hat{v}-v_d||_{L^2(\Omega)}||\g_1||_{L^2(\Omega)}+  || f'(\hat{u})||_{L^\infty(\Omega)}|| \g_2||_{L^2(\Omega)}|| \Delta \g_2||_{L^2(\Omega)}\\
     &\hspace{0.5cm}+||\hat{u}-u_d||_{L^2(\Omega)}||\g_2||_{L^2(\Omega)}+ || \g_1||_{L^2(\Omega)} || \nabla \g_2||_{L^4(\Omega)} ||\hat{u}-\lambda \Delta \hat{u}||_{L^4(\Omega)} \\
     &\hspace{1cm}+ \lambda||\nabla  \g_1||_{L^2(\Omega)} ||\nabla \hat{u}||_{L^4(\Omega)}|| \nabla \g_2||_{L^4(\Omega)} +||\g_1||_{L^2(\Omega)}||\hat{u}||_{H^3(\Omega)}|| || \nabla \g_2||_{L^4(\Omega)} .                                          \end{align*}
Hence,
\begin{align*}
    -&\,\frac{1}{2}\frac{d}{dt} ||\g_1||^2_{L^2(\Omega)}+ (\mu-\lambda) || \nabla \g_1||^2_{L^2(\Omega)}-\frac{1}{2}\frac{d}{dt} ||\g_2||^2_{L^2(\Omega)}+\frac{1}{2} || \Delta \g_2||^2_{L^2(\Omega)}\\
    \leq&\, C_1\Big(||\g_1||^2_{L^2(\Omega)} + ||\g_2||^2_{L^2(\Omega)}\Big)+ C||\hat{u}-u_d||^2_{L^2(\Omega)}+ C || \hat{v}-v_d||^2_{L^2(\Omega)},
\end{align*}
where
\begin{align*}
    C_1=C(1+ ||\hat{u}-\Delta \hat{u}||^2_{L^4(\Omega)}+||\hat{u}||^2_{H^3(\Omega)}+||f'(\hat{u})||^2_{L^\infty(\Omega)}+||\hat{u}||^{16}_{L^4(\Omega)}).
\end{align*}
Choose $\mu > \lambda$, then integrating the above inequality over $(t,T)$ and applying Gronwall's inequality, we obtain
\begin{align*}
||\g_1(t)||^2_{L^2(\Omega)}+||\g_2(t)||^2_{L^2(\Omega)}+ \int_{0}^{t}\Big(2 (\mu-\lambda) || \nabla \g_1||^2_{L^2(\Omega)}+ || \Delta \g_2||^2_{L^2(\Omega)}\Big)\,dt \\
 \leq C(||v_0||_{L^2(\Omega)^n}, ||\phi_0||_{H^1(\Omega)}, ||\theta||_{\mathcal{H}},T).
\end{align*}

Furthermore,
\begin{align*}
    \int_{0}^{T} || \partial_t \g_1||^2_{\mathcal{V}^*}\,dt \leq &   2 \int_{0}^{T}\Big(  \mu^2 || \nabla \g_1||^2_{L^2(\Omega)}+  || \g_2||^2_{L^6(\Omega)} || \nabla \hat{u}||^2_{L^2(\Omega)}+ ||\hat{v}-v_d||^2_{L^2(\Omega)} \Big)\,dt
\end{align*}
and
\begin{align*}
     \int_{0}^{T} || \partial_t \g_2||^2_{H^2(\Omega)^*}\,dt \leq & 2 \int_{0}^{T}\Big(  ||\hat{v}||^2_{L^6(\Omega)}||\nabla \g_2||^2_{L^2(\Omega)}+  \lambda^2 ||\nabla \hat{w}||^2_{L^2(\Omega)} || \g_1||^2_{L^3(\Omega)} \\
     &+ ||f'(\hat{u})||^2_{L^3(\Omega)}||\g_3||^2_{L^2(\Omega)})+ ||\hat{u}-u_d||^2_{L^2(\Omega)}+ ||\g_3||^2_{L^2(\Omega)}\Big) \, dt.
\end{align*}
\par The uniqueness of weak solutions follows from the linearity of the system. By employing similar techniques as in Theorem \ref{thm 3.2}, one can derive the estimate for $q$. 

\end{proof}

  \begin{lemma}
      Let the conditions (A1) -- (A4) are satisfied and $\hat{\theta} \in \mathcal{U}_{ad}$ be an optimal control for the problem $\mathcal{(OP)}$, with the associated state $\mathcal{S}(\hat{\theta})=(\hat{v},\hat{u},\hat{w})$. Let $(\g_1, \g_2, \g_3)$ denote the solution of the corresponding adjoint problem \eqref{adeq1} -- \eqref{adeq8}. Then the optimal control $\hat{\theta}$ satisfies the following variational inequality:
      \begin{equation*}
           \int_{(0,T)\times  \Omega} (\g_1+ \beta \hat{\theta})\cdot (\theta-\hat{\theta})\,\di(x,t) \geq 0\,\,\text{ for all $\theta \in \mathcal{U}_{ad}$}.
      \end{equation*}
  \end{lemma}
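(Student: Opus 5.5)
The plan is to start from the variational inequality of Theorem \ref{thm 4.4}. With $h=\theta-\hat{\theta}$ and $(\varphi_1,\varphi_2,\varphi_3)=\mathcal{DS}(\hat{\theta})(h)$ the solution of \eqref{lin1} -- \eqref{lin6}, that theorem gives
\begin{equation*}
\int_{(0,T)\times\Omega}(\hat{u}-u_d)\varphi_2\,\di(x,t)+\int_{(0,T)\times\Omega}(\hat{v}-v_d)\cdot\varphi_1\,\di(x,t)+\beta\int_{(0,T)\times\Omega}\hat{\theta}\cdot h\,\di(x,t)\ge 0 .
\end{equation*}
It then suffices to prove the duality identity
\begin{equation*}
\int_{(0,T)\times\Omega}(\hat{v}-v_d)\cdot\varphi_1\,\di(x,t)+\int_{(0,T)\times\Omega}(\hat{u}-u_d)\varphi_2\,\di(x,t)=\int_{(0,T)\times\Omega}\g_1\cdot h\,\di(x,t),
\end{equation*}
where $(\g_1,\g_2,\g_3)$ is the adjoint solution from the preceding theorem.

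To derive this identity, I would test the adjoint equation \eqref{adeq1} with $\varphi_1$ and \eqref{adeq5} with $\varphi_2$, integrate over $(0,T)\times\Omega$, and integrate by parts in time. The terminal conditions \eqref{adeq4}, \eqref{adeq8} and the initial conditions \eqref{lin6} kill all boundary terms in $t$, so $-\int\partial_t\g_1\cdot\varphi_1=\int\g_1\cdot\partial_t\varphi_1$, and similarly for $\g_2$. Since $\varphi_1$ is divergence free and vanishes on $\partial\Omega$, the pressure $q$ drops out, and the Stokes part is symmetric: $\int\nabla\g_1:\nabla\varphi_1$. In the same way, testing \eqref{lin1} with $\g_1$ and \eqref{lin2} with $\g_2$ removes $\bar p$ and produces the term $\int h\cdot\g_1$. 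Subtracting the two resulting identities, the time derivatives, the viscous terms and the $\alpha$-terms cancel immediately.

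The remaining work is to match the coupling terms, using \eqref{lin3} and \eqref{adeq6}. I would check the following three pairings separately.
\begin{itemize}
\item The transport terms: $\int\hat v\cdot\nabla\varphi_2\,\g_2=-\int\hat v\cdot\nabla\g_2\,\varphi_2$, by $\nabla\cdot\hat v=0$ and $\hat v=0$ on $\partial\Omega$.
\item The term $\int(\varphi_1\cdot\nabla\hat u)\g_2$ against $\int\g_2\nabla\hat u\cdot\varphi_1$ coming from \eqref{adeq1}.
\item The term $\lambda\int\varphi_2\nabla\hat w\cdot\g_1$ against $\lambda\int\nabla\hat w\cdot\g_1\varphi_2$ from \eqref{adeq5}.
\end{itemize}
The delicate term is $-\lambda\int\hat u\nabla\varphi_3\cdot\g_1$. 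Since $\nabla\cdot\g_1=0$ and $\g_1$ vanishes on the boundary, it equals $\lambda\int\varphi_3\nabla\hat u\cdot\g_1$. Inserting $\varphi_3=-\Delta\varphi_2+f'(\hat u)\varphi_2$ and using the Neumann conditions \eqref{lin5} and \eqref{adeq7}, the fourth-order Cahn--Hilliard terms $-\int\Delta\varphi_3\,\g_2$ and $\int\Delta\g_3\,\varphi_2$, $\int f'(\hat u)\g_3\varphi_2$ should reorganize via \eqref{adeq6}, $\g_3=-\Delta\g_2-\lambda\nabla\hat u\cdot\g_1$, into $\int\varphi_3\g_3$-type quantities that cancel exactly against the $\lambda\nabla\hat u\cdot\g_1$ contribution.

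I expect this last bookkeeping step to be the main obstacle. It needs the signs in \eqref{adeq1}, \eqref{adeq5} and \eqref{adeq6} to be precisely the formal adjoints of \eqref{lin1} -- \eqref{lin3}. It also needs enough regularity to justify the integrations by parts: $\varphi_2\in L^2(0,T;H^3)$, $\g_2\in L^2(0,T;H^2)$, and the duality pairings of time derivatives in $L^2(0,T;\mathcal V^*)\times L^2(0,T;\mathcal V)$. If a sign in the adjoint does not match, I would rederive the adjoint as the formal transpose and adjust the sign conventions.

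Once the identity holds, substituting it into Theorem \ref{thm 4.4} gives $\int(\g_1+\beta\hat\theta)\cdot(\theta-\hat\theta)\ge0$ for all $\theta\in\mathcal U_{ad}$, which is the claim.
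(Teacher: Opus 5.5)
Your proposal is correct and follows essentially the paper's own route. Test the linearized system with $(\g_1,\g_2,\g_3)$ and the adjoint system with $(\varphi_1,\varphi_2,\varphi_3)$, subtract to obtain $\int(\hat v-v_d)\cdot\varphi_1+\int(\hat u-u_d)\varphi_2=\int\g_1\cdot(\theta-\hat\theta)$, and insert this into Theorem \ref{thm 4.4}. The signs of the adjoint system match the linearized system, so no re-derivation is needed: the $\varphi_3$-terms combine to $\int\varphi_3(-\Delta\g_2-\lambda\nabla\hat u\cdot\g_1)=\int\varphi_3\g_3$ by \eqref{adeq6}, and the $\g_3$-terms combine to $\int\g_3(\Delta\varphi_2-f'(\hat u)\varphi_2)=-\int\g_3\varphi_3$ by \eqref{lin3}, so these cancel exactly as you anticipated.
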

\begin{proof}
    Choose the test functions $\g_1, \, \g_2$ and $\g_3$ for the weak formulations of \eqref{lin1}, \eqref{lin2} and \eqref{lin3}, respectively, we obtain
\begin{align}
    \int_{0}^{T}\langle \partial_t \varphi_1, \g_1 \rangle\,dt + \mu \int_{(0,T)\times  \Omega} \nabla \varphi_1 \cdot \nabla \g_1\, \di(x,t) + \lambda \int_{(0,T)\times  \Omega} \varphi_2  \nabla \hat{w} \cdot \g_1\,\di(x,t)\nonumber\\+ \lambda \int_{(0,T)\times  \Omega} \hat{u}\nabla \varphi_3 \cdot \g_1\,\di(x,t)
    +\int_{(0,T)\times  \Omega} \partial_t \varphi_2 \g_2 \,\di(x,t) + \int_{(0,T)\times  \Omega} \nabla \varphi_3 \cdot \nabla \g_2 \,\di(x,t)\nonumber\\
    +\int_{(0,T)\times  \Omega} \varphi_1 \cdot \nabla \hat{u} \g_2 \,\di(x,t)+ \int_{(0,T)\times  \Omega} \hat{v} \cdot \nabla \varphi_2 \g_2 \,\di(x,t)+ \alpha \int_{(0,T)\times  \Omega} \varphi_2 \g_2 \,\di(x,t)\nonumber\\
    - \int_{(0,T)\times  \Omega}\varphi_3 \g_3 \,\di(x,t)+ \int_{(0,T)\times  \Omega} \nabla \varphi_2 \cdot \nabla \g_3 \,\di(x,t)+ \int_{(0,T)\times  \Omega} f'(\hat{u})\phi_1\g_3 \,\di(x,t)\nonumber\\
    =\int_{(0,T)\times  \Omega}(\theta-\hat{\theta}) \cdot \g_1 \,\di(x,t), \label{OCP1}
\end{align}
where $h= \theta-\hat{\theta}$ and $(\g_1,\g_2,\g_3)$ is the weak solution of the adjoint state \eqref{adeq1} - \eqref{adeq8}. \par Again, choose the test functions $\varphi_1$, $\varphi_2$ and $\varphi_3$ for the weak formulations of  \eqref{adeq1}, \eqref{adeq5} and \eqref{adeq6}, respectively, we get
\begin{align}
    \int_{0}^{T}\langle-\partial_t \g_1, \varphi_1\ \rangle\,dt+ \mu \int_{(0,T)\times  \Omega} \nabla \varphi_1 \cdot \nabla \g_1\, \di(x,t)+ \int_{(0,T)\times  \Omega} \g_2  \nabla \hat{u} \cdot \varphi_1 \, \di(x,t)\nonumber\\
    - \int_{(0,T)\times  \Omega} (\hat{v}-v_d)\cdot \varphi_1\, \di(x,t)
    -\int_{(0,T)\times  \Omega} \partial_t \g_2 \varphi_2 \, \di(x,t)- \int_{(0,T)\times  \Omega} \hat{v}\cdot \nabla g_2 \varphi_2 \, \di(x,t)\nonumber\\ 
    + \alpha \int_{(0,T)\times  \Omega} \g_2 \varphi_2  \, \di(x,t)+ \lambda \int_{(0,T)\times  \Omega} \nabla \hat{w} \g_1 \varphi_2 \, \di(x,t)
    + \int_{(0,T)\times  \Omega}f'(\hat{u}) \g_3 \varphi_2 \, \di(x,t)\nonumber\\
    - \int_{(0,T)\times  \Omega} (\hat{u}- u_d) \varphi_2 \, \di(x,t)+ \int_{(0,T)\times  \Omega} \nabla \g_3 \cdot \nabla \varphi_2 \, \di(x,t)-\int_{(0,T)\times  \Omega} \g_3 \varphi_3 \, \di(x,t)\nonumber\\
    + \int_{(0,T)\times  \Omega} \nabla \g_2 \cdot \nabla \varphi_3 \, \di(x,t)
    - \lambda \int_{(0,T)\times  \Omega} \nabla \hat{u} \cdot \g_1 \varphi_3 \, \di(x,t)=0, \label{OCP2}
\end{align}
where $(\varphi_1, \varphi_2, \varphi_3)$ is the solution of the linearized system \eqref{lin1} -- \eqref{lin6}.\\

Hence from \eqref{OCP1} -- \eqref{OCP2}, it implies that
\begin{align*}
    \int_{(0,T)\times  \Omega}(\hat{u}-u_d)\varphi_2\, \di(x,t)+ \int_{(0,T)\times  \Omega}(\hat{v}-v_d)\cdot \varphi_1 \,\di(x,t)\\
    = \int_{(0,T)\times  \Omega}(\theta-\hat{\theta}) \cdot \g_1 \,\di(x,t).
\end{align*}
Therefore from Theorem \ref{thm 4.4}, it follows that
\begin{align}
    \int_{(0,T)\times  \Omega} (\g_1+ \beta \hat{\theta})\cdot (\theta-\hat{\theta})\,\di(x,t) \geq 0. \label{fop}
\end{align}

\end{proof}

\begin{remark}
Moreover, $\mathcal{U}_{ad}$ is a nonempty, closed and convex subset of $L^2(0,T;\mathcal{H})$, then the variational inequality \eqref{fop} implies that the optimal control can be characterized pointwise as the $L^2(0,T;\mathcal{H})$ orthogonal projection of $-\frac{1}{\beta}\g_1$ onto $\mathcal{U}_{ad}$. This provides an explicit representation of the optimal control with respect to the adjoint variable.    
\end{remark}

\section*{Future scope}As a possible direction for future research, the present optimal control problem may be extended to porous media flow models, since the Stokes--Cahn--Hilliard--Oono system naturally arises in the study of multiphase flows through porous structures. In particular, it would be interesting to investigate the multiscale analysis of the associated optimal control problem in periodically perforated domains.
\begin{acknowledgements}
 AK expresses gratitude to NBHM (Ref. No.- 020 3/11/2019-R$\&$D-11/9247) for providing fellowship support during his doctoral studies..
 \end{acknowledgements}


\bibliographystyle{1}
\bibliography{cite}








\end{document}